\providecommand{\U}[1]{\protect\rule{.1in}{.1in}}
\providecommand{\U}[1]{\protect\rule{.1in}{.1in}}
\newtheorem{theorem}{Theorem}
\theoremstyle{plain}
\newtheorem{corollary}{Corollary}
\newtheorem{example}{Example}
\newtheorem{lemma}{Lemma}
\newtheorem{proposition}{Proposition}
\numberwithin{equation}{section}
\begin{document}
\title[Chaos game]{The chaos game on an iterated function system from a topological point of view}
\author{Michael F. Barnsley, Krzysztof Le\'{s}niak}
\date{\today}
\keywords{iterated function system, chaos game algorithm, disjunctive sequences, chain
with complete connections, $\sigma$-porous sets, Baire category}

\begin{abstract}
We investigate combinatorial issues relating to the use of random orbit
approximations to the attractor of an iterated function system with the aim of
clarifying the role of the stochastic process during generation the orbit. A
Baire category counterpart of almost sure convergence is presented; and a link
between topological and probabilistic methods is observed.

\end{abstract}
\maketitle

\section{Introduction}

We prove that the chaos game, for all but a $\sigma$-porous set of orbits,
yields a set that intersects all of the fibres of an attractor $A$ of a
general iterated function system\ (IFS). The IFS may not be contractive and
may possess multiple attractors. In \cite{ChaosGame} it was shown that, in
proper metric spaces, attractors are limits of certain non-stationary
stochastic chaos games; this generalized the canonical explanation, based on
stationary stochastic processes, \cite{Elton}, of why the chaos game works to
generate attractors. Here we present different results, based primarily in
topology and category rather than in stochastic processes. Also, our results
may have implications on how data strings are analyzed, as we explain next.

An iterated function system $F=(X,f_{\sigma}:\sigma\in\Sigma)$ is a finite set
of discrete dynamical systems $f_{\sigma}:X\rightarrow X$. If $(\sigma
_{k})_{k=1}^{\infty}$ is a sequence in $\Sigma$ then the corresponding chaos
game orbit \cite[p.2 and p.91]{FractalsEver} of a point $x_{0}\in X$ is the
sequence $(x_{k})_{k=0}^{\infty}$ defined iteratively by $x_{k}=f_{\sigma_{k}%
}(x_{k-1})$ for $k=1,2,...$. The chaos game may be used (i) in computer
graphics, to render pictures of fractals and other sets \cite{Superfractals,
barnsuper2, nikiel}, and (ii) in data analysis to reveal patterns in long data
strings such as DNA base pair sequences, see for example papers that cite
\cite{jeffrey}. If the maps $f_{\sigma}$ are contractions on a complete metric
space $X$, and if the sequence $(\sigma_{k})_{k=1}^{\infty}$ is suitably
chaotic or random, then the tail of $(x_{k})_{k=0}^{\infty}$ converges to the
unique attractor of $F$. In applications to computer graphics, long finite
strings $(\sigma_{k})_{k=1}^{L}$ are used, say with $L$ $=10^{9}$. In
applications to genome analysis, if $(\sigma_{k})_{k=1}^{L}$ is a long finite
sequence, say $L=2.9\times10^{9}$ for the number of base pairs in human DNA,
and if the attractor of $F$ is a simple geometrical object such as a square,
then $(x_{k})_{k=0}^{L}$ may be plotted, yielding a "picture" of $(\sigma
_{k})_{k=1}^{L}$. Such pictures may be used to identify patterns in
$(\sigma_{k})_{k=1}^{L},$ and used, for example, to distinguish different
types of DNA, \cite{jeffrey}. In the first case (i) a stochastic process is
used to define the chaos game orbit $(x_{k})_{k=0}^{L}$ and to describe the
attractor $A$ of the IFS. In the second case (ii) a deterministic process,
specified by a given data string, is used to define the chaos game orbit
$(x_{k})_{k=0}^{L}$; how this orbit sits in the attractor, that is,
\textit{the relationship between the deterministic orbit and the stochastic
orbit}, provides the pattern or signature of the string. Notice that there are
two types of chaos game here: one describing an attractor, and the other
describing a data string. Our results suggest the feasibility of data analysis
(a) using topological concepts (b) using strongly-fibred IFSs.

The type of IFS $F$ that we consider is quite general: the only restrictions
are that the underlying metric space $X$ is complete and the functions
$f_{\sigma}:X\rightarrow X$ are continuous. In Section \ref{defsec} we define
an attractor, its basin of attraction, and chaos games. In Section
\ref{defsec} we also define the fibres of an attractor and describe how
attractors are classified according to their fibre structure. The types of
fibre structure of an attractor are minimal-fibred, strongly-fibred and
point-fibred. In contrast to the situation for a contractive IFS, as in the
classical Hutchinson theory, see \cite{hutchinson}, it is not generally
possible to associate a continuous map from the code space $\Sigma^{\infty}$
onto an attractor. Consequently, results concerning the behaviour of the chaos
game cannot be inferred from analogous results, on the code space itself, by
continuous projection onto the attractor. Nonetheless, in Section
\ref{mainideasec}, we establish Theorem \ref{Theorem1}, which says that the
tail of any disjunctive chaos game orbit, starting from any point in the basin
of an attractor, converges in the Hausdorff metric to a set $C_{\infty}$ that
is both contained in the attractor and contains a point belonging to each
fibre of the attractor. This is achieved via a sequence of lemmas, similar to
ones in \cite{ChaosGame}, but replacing stochastic sequences by disjunctive
ones, and lifting the requirement that $X$ be proper. Theorem \ref{Theorem1}
allows us to prove in Section \ref{categorysec} that the chaos game, starting
from any point in the basin of strongly-fibred attractor, yields the
attractor, except for a set of strings that is small in the sense of Baire
category; specifically Theorem \ref{porousthm} says that the set of strings
for which the chaos game does not converge to the strongly-fibred attractor is
$\sigma$-porous, which is stronger than first category. In Section
\ref{probsec}, we define the notion of a disjunctive stochastic process, which
generalizes the notion of a chain with complete connections \cite{onicescu};
then we prove, \textit{as a consequence the foregoing material}, that Theorem
\ref{stochaosgame} holds: namely, a chaos game produced by disjunctive
stochastic process converges to a strongly-fibred attractor almost surely.
Thus, we see that the stochastic version is a limiting consequence of
combinatorics and topology, as it should be.

Finally, in Section \ref{rapunzelsec} we establish Theorem \ref{rapunzelthm}
-- the Rapunzel Theorem -- which illustrates the power of the disjunctiveness
in the chaos game algorithm in the commonly occuring situation where an IFS of
homeomorphisms on a compact metric space possesses a unique point-fibred
attractor $A$ and a unique point-fibred repeller $A^{\ast}$. This situation
occurs for M\"{o}bius IFSs on the Riemann sphere \cite{V}. Basically, the
result says that if $(\sigma_{k})_{k=1}^{\infty}$ is a disjunctive sequence,
then even when the point $x_{0}$ belongs to the dual repeller $A^{\ast},$ the
"usual/typical/almost always" event is that the chaos game orbit "escapes from
the tower", the disjunctive sequence "lets down her hair" and the sequence of
points in the chaos game orbit dances out of the clutches of the dual
repeller. Why is this surprising? For a number of reasons, but mainly this:
$A^{\ast}$ is the complement of the basin of attractor $A$, so it is not true
that $\lim_{k\rightarrow\infty}F^{k}(\{x\})=A$ for $x\in A^{\ast}$, and
$A^{\ast}$ may have nonempty interior.

\section{Definitions\label{defsec}}

Throughout, let $(X,d)$ be a complete metric space with metric $d$. For $b\in
X$, $C\subset X$ we denote
\[
d(b,C):=\inf_{c\in C}d(b,c),
\]
and for $B\subset X$, $\varepsilon>0$
\[
N_{\varepsilon}B:=\{x\in X:d(x,B)<\varepsilon\}.
\]
The \textbf{Hausdorff distance} between $B,C\subset X$ is defined as
\[
h(B,C):=\inf\{r>0\,:\,B\subset N_{r}C,C\subset N_{r}B\}.
\]
Let $\mathcal{K}(X)$ denote the set of nonempty compact subsets of $X$. Then
$(\mathcal{K}(X),h)$ is also a complete metric space, and may be referred to
as a \textbf{hyperspace} (\cite{FractalsEver, Superfractals, Beer, Engelking,
HandbookMulti}).

The system $F=(X,f_{\sigma}:\sigma\in\Sigma)$, comprising a finite set of
continuous maps $f_{\sigma}:X\rightarrow X$, is called an \textbf{iterated
function system} (IFS) on $X$ \cite{barnsleydemko}. Without risk of ambiguity
we use the same notation $F$ for the IFS and the associated \textbf{Hutchinson
operator }%
\[
F:\mathcal{K}(X)\rightarrow\mathcal{K}(X)\ni B\longmapsto F(B)=\bigcup
_{\sigma\in\Sigma}f_{\sigma}(B)=\{f_{\sigma}(b):\sigma\in\Sigma,b\in B\}.
\]
This map is well-defined because the $f_{\sigma}$ are continuous and finite
unions of continuous images of compacta are compacta. Furthermore, it is a
basic fact that $F:\mathcal{K}(X)\rightarrow\mathcal{K}(X)$ is continuous; a
proof can be found in \cite{BarnsleyLesniak}. The $k$-fold composition of $F$
is written as $F^{k}$.

Following \cite{ProjectiveIFS} we say that $A\in\mathcal{K}(X)$ is an
\textbf{attractor} of the IFS $F$ on $X$ when there exists an open
neighbourhood $U(A)\supset A$ such that, in the metric space $(\mathcal{K}%
(X),h)$,
\begin{equation}
F^{k}(B)\underset{k\rightarrow\infty}{\longrightarrow}A,\text{ for
}U(A)\supset B\in\mathcal{K}(X).\label{basin}%
\end{equation}
The union $\mathcal{B}(A)$ of all open neighborhoods $U(A)$ such that
\eqref{basin} is true is called the \textbf{basin }of $A$. Since
$F:\mathcal{K}(X)\rightarrow\mathcal{K}(X)$ is continuous it follows that $A$
is an invariant set for $F$, i.e. $A=F(A)$. Clearly, $A$ is the unique fixed
point of $F$ in the basin of $A$, i.e., if $B=\bigcup_{\sigma\in\Sigma
}f_{\sigma}(B)$ and $\mathcal{B}(A)\supset B\in\mathcal{K}(X)$, then $B=A$.

The \textbf{coordinate map} $\pi:\Sigma^{\infty}\rightarrow\mathcal{K}(A)$ for
$A$ (w.r.t. $F$) is defined by
\[
\pi(\rho)=\bigcap_{K=1}^{\infty}f_{{{\rho}_{1}}}\circ....\circ f_{{{\rho}_{K}%
}}(A)=:A_{{{\rho}}}%
\]
for all ${{\rho\in}\Sigma}^{\infty}$. The set $A_{{{\rho}}}$ is called a
\textbf{fibre} of $A$. If $A_{{{\rho}}}$ is a singleton for all $\rho
\in{\Sigma}^{\infty}$, then $A$ is said to be \textbf{point-fibred}. $A$ is
\textbf{strongly-fibred} means that if $\mathcal{U}$ is an open cover of $A$
and $U\in\mathcal{U}$ then there is ${{\rho\in}}\Sigma^{\infty}$ such that
$A_{{{\rho}}}\subset U$. For reasons related to a more general notion of
"attractor", all attractors of IFSs are said to be \textbf{minimally-fibred}.
Strongly-fibred is weaker than point-fibred which is weaker than the situation
where $A$ is the attractor of a contractive IFSs. Classification of attractors
according their fibration is discussed in \cite[Chapter 4]{kieninger}.

Let $({\sigma}_{1},{\sigma}_{2},{\ldots})\in{\Sigma}^{\infty}.$ The associated
orbit of $x_{0}\in U(A)$ under $F$ is the sequence $(x_{k})_{k=0}^{\infty}$
defined by
\begin{equation}
\left\{
\begin{array}
[c]{l}%
x_{0}\in U(A),\\
x_{k}:=f_{{\sigma}_{k}}(x_{k-1}),\;k\geq1.
\end{array}
\right.  \label{randomorbit}%
\end{equation}
If $({\sigma}_{1},{\sigma}_{2},{\ldots})$ is chosen according to some
stochastic process, then $(x_{k})_{k=0}^{\infty}$ is referred to as a
\textbf{random orbit}. More generally, such orbits are referred to as
\textbf{chaos game }orbits, see \cite{FractalsEver} and \cite{peakframe}, for example.

We use the notation $f_{w}:=f_{{\sigma}_{1}}{\circ}{\ldots}{\circ}f_{{\sigma
}_{k}}$ for a finite word $w=({\sigma}_{1},{\ldots},{\sigma}_{k})\in{\Sigma
}^{k}$, so that $x_{k}=f_{w}(x_{0})$. The concatenation of two words
$u=({\upsilon}_{1},{\ldots},{\upsilon}_{m})\in{\Sigma}^{m}$ and $w=({\sigma
}_{1},{\ldots},{\sigma}_{k})\in{\Sigma}^{k}$ is $uw:=({\upsilon}_{1},{\ldots
},{\upsilon}_{m},{\sigma}_{1},{\ldots},{\sigma}_{k})\in{\Sigma}^{m+k}$. Notice
that $f_{uw}=f_{u}\circ f_{w}$. We may omit the parentheses and commas; for
example $u={\upsilon}_{1}{\ldots}{\upsilon}_{m}$.

\section{Main Idea\label{mainideasec}}

Throughout this section let $F=(X,f_{\sigma}:\sigma\in\Sigma)$ be an IFS with
attractor $A\in\mathcal{K}(X)$ and basin $\mathcal{B}(A)$. Let $(x_{k}%
)_{k=0}^{\infty}$ denote the orbit of $x_{0}$ under $F$, associated with
$({\sigma}_{1},{\sigma}_{2},{\ldots})\in{\Sigma}^{\infty}$.

The following observation lies at the heart of this investigation. It is
hidden in \cite{ChaosGame}; compare also with \cite[Theorem~12.8.2]%
{LasotaMackey}.

\begin{lemma}
\label{Wandering} Given $x_{0}\in\mathcal{B}(A)$, we have $y\in A$ if and only
if, for given $\varepsilon>0$ there exists a natural number $m$ and a word
$w=(\sigma_{m},\sigma_{m-1},{\ldots},\sigma_{1})\in{\Sigma}^{m}$ such that
$d(f_{w}(x_{0}),y)<\varepsilon.$
\end{lemma}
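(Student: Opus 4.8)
The plan is to recognise the set $\{f_w(x_0):w\in\Sigma^m\}$ as the iterate $F^m(\{x_0\})$ of the Hutchinson operator applied to the singleton $\{x_0\}$, and then to read off both implications from the convergence of these iterates to $A$. Unravelling the definition of $F$ gives $F^m(\{x_0\})=\{f_w(x_0):w\in\Sigma^m\}$, and since $x_0\in\mathcal{B}(A)$ the compact set $\{x_0\}$ lies in the basin, so $F^m(\{x_0\})\to A$ in $(\mathcal{K}(X),h)$ as $m\to\infty$. Thus the condition imposed on $y$ says precisely that $y$ can be approximated, within any prescribed $\varepsilon$, by a point of $F^m(\{x_0\})$ for a suitable $m$, and the lemma amounts to identifying $A$ with the limit of the sets $F^m(\{x_0\})$.

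For the forward implication, suppose $y\in A$ and fix $\varepsilon>0$. By the convergence there is $M$ with $h(F^m(\{x_0\}),A)<\varepsilon$ for all $m\ge M$; for any such $m$ one has $A\subset N_{\varepsilon}F^m(\{x_0\})$, hence $y\in N_{\varepsilon}F^m(\{x_0\})$. By the definition of $N_{\varepsilon}$ some point of $F^m(\{x_0\})$, necessarily of the form $f_w(x_0)$ with $w\in\Sigma^m$, lies within $\varepsilon$ of $y$, which is the word required.

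For the reverse implication I would use the complementary inclusion, and this is where the real care is needed. The same convergence gives $M$ with $F^m(\{x_0\})\subset N_{\varepsilon}A$ for all $m\ge M$, so that every $f_w(x_0)$ with $|w|\ge M$ satisfies $d(f_w(x_0),A)<\varepsilon$; the point is that the approximating words must be taken long, since for small $m$ the iterate $F^m(\{x_0\})$ may contain points far from $A$ when $x_0\in\mathcal{B}(A)\setminus A$. Choosing for each $\varepsilon>0$ such a long word with $d(f_w(x_0),y)<\varepsilon$, the triangle inequality gives $d(y,A)\le d(y,f_w(x_0))+d(f_w(x_0),A)<2\varepsilon$; letting $\varepsilon\to0$ forces $d(y,A)=0$, and since $A$ is compact, hence closed, we conclude $y\in A$. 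The main obstacle is thus entirely in this direction: it is the one-sided inclusion $F^m(\{x_0\})\subset N_{\varepsilon}A$, valid only for large $m$, that drives the argument, everything else reducing to the two defining inclusions of the Hausdorff metric and the closedness of $A$.
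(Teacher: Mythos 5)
Your forward direction is exactly the paper's argument and is correct. The reverse direction is where the trouble lies, and you have in fact put your finger on precisely the right issue --- only the one-sided inclusion $F^{m}(\{x_{0}\})\subset N_{\varepsilon}A$ for \emph{large} $m$ is available --- but your resolution, ``choosing for each $\varepsilon>0$ such a long word with $d(f_{w}(x_{0}),y)<\varepsilon$'', is not licensed by the hypothesis. The hypothesis only asserts that for each $\varepsilon$ there is \emph{some} $m$ and some $w\in\Sigma^{m}$ with $d(f_{w}(x_{0}),y)<\varepsilon$; nothing forces $m\geq M$. Without that length control the implication as literally stated is false: take $X=\R$, $\Sigma=\{1\}$, $f_{1}(x)=x/2$, so that $A=\{0\}$ and $\mathcal{B}(A)=\R$; with $x_{0}=1$ and $y=1/2$ the condition holds for every $\varepsilon>0$ (take $m=1$), yet $y\notin A$. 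So the step you flagged is a genuine gap, not a point of mere care: the condition characterizes membership in the closure of $\bigcup_{m\geq1}F^{m}(\{x_{0}\})$, which may strictly contain $A$ when $x_{0}\in\mathcal{B}(A)\setminus A$.

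In fairness, the paper's own proof makes the same unjustified jump (concluding $y\in\lim_{m\rightarrow\infty}F^{m}(\{x_{0}\})=A$ from an estimate at a single, possibly small, $m$), and the way the lemma is actually invoked in the proof of Lemma~\ref{Lemma1} --- via subsequences $k_{l}\rightarrow\infty$ --- shows that the intended hypothesis is that the approximating words can be taken with $m$ arbitrarily large. If the statement is amended to ``for every $\varepsilon>0$ and every $N$ there exist $m\geq N$ and $w\in\Sigma^{m}$ with $d(f_{w}(x_{0}),y)<\varepsilon$'' (a strengthening that the forward direction delivers automatically, since $h(F^{m}(\{x_{0}\}),A)<\varepsilon$ for \emph{all} sufficiently large $m$), then your triangle-inequality argument closes correctly and is, if anything, reasoned more carefully than the original. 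As written, though, the choice of a long word needs to be justified, and it cannot be.
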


\begin{proof}
Suppose $y\in A$ and let $\varepsilon>0$ be given. The definition of attractor
implies that there exists an iteration $m$ such that $h(F^{m}(\{x_{0}%
\}),A)<\varepsilon,$ and in particular
\[
y\in A\subset N_{\varepsilon}\,F^{m}(\{x_{0}\}).
\]
But
\[
N_{\varepsilon}\,F^{m}(\{x_{0}\})=N_{\varepsilon}\bigcup_{w\in\Sigma^{m}}%
f_{w}(\left\{  x_{0}\right\}  )=\bigcup_{w\in\Sigma^{m}}N_{\varepsilon}%
f_{w}(\left\{  x_{0}\right\}  )\text{.}%
\]
It follows that $y\in N_{\varepsilon}f_{w}(\left\{  x_{0}\right\}  )$ for some
$w\in\Sigma^{m}$. It follows that there exists a word $w=(\sigma_{m}%
,\sigma_{m-1},{\ldots},\sigma_{1})\in{\Sigma}^{m}$ such that $d(f_{w}%
(x_{0}),y)<\varepsilon$.

Conversely, suppose $y$ is such that, given $\varepsilon>0,$ there exists a
natural number $m$ and a sequence $w=(\sigma_{m},\sigma_{m-1},{\ldots}%
,\sigma_{1})\in{\Sigma}^{m}$ with $d(f_{w}(x_{0}),y)<\varepsilon.$ It follows
that $d(y,F^{m}(\left\{  x_{0}\right\}  ))<\varepsilon$. It follows that
$y\in\lim_{m\rightarrow\infty}F^{m}(\left\{  x_{0}\right\}  )=A$.
\end{proof}

For $\sigma\in\Sigma^{\infty}$, $x_{0}\in X$, $k\in\{1,2,...\}$, define%
\[
x_{k}:=x_{k}(x_{0},\sigma):=f_{{{\sigma}_{k}}}\circ....\circ f_{{{\sigma}_{1}%
}}(x_{0})\text{.}%
\]
For all $K=0,1,2,...$ define%
\[
C_{K}:=C_{K}(x_{0},{\sigma):=\overline{\bigcup_{k=K}^{\infty}\,\{x_{k}\}}%
}\text{.}%
\]
It is straightfoward to prove that $\left\{  x_{k}\right\}  _{k=0}^{\infty}$
is totally bounded; consequently $\left\{  C_{K}\right\}  _{K=0}^{\infty}$ is
a decreasing (nested) sequence of nonempty compact sets that converges in the
Hausdorff metric to a unique nonempty compact limit
\[
C_{\infty}:=C_{\infty}(x_{0},{\sigma):=}\bigcap_{K=1}^{\infty}C_{K}\text{.}%
\]

\begin{lemma}
\label{Lemma1}If $A$ is an attractor of $F$, $\mathcal{B}(A)$ is the basin of
$A$, $x_{0}\in\mathcal{B}(A)$ and $\sigma\in\Sigma^{\infty}$, then
\[
C_{\infty}(x_{0},{\sigma)}\subset A.
\]

\end{lemma}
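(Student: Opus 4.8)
The plan is to show that every point of $C_{\infty}(x_0,\sigma)$ lies in $A$ by exploiting the fact that $A$ is closed and that $C_\infty$ is built from the orbit tail. Since $A\in\mathcal{K}(X)$ is compact, it is closed, so it suffices to show that the orbit points $x_k$ eventually get arbitrarily close to $A$, and then pass to the closure.

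First I would invoke the definition of the attractor directly on the orbit. Because $x_0\in\mathcal{B}(A)$, there is an open neighbourhood $U(A)$ containing $x_0$ with $F^k(\{x_0\})\to A$ in the Hausdorff metric. For a given $\varepsilon>0$, this gives an $N$ such that $h(F^k(\{x_0\}),A)<\varepsilon$ for all $k\geq N$; in particular $F^k(\{x_0\})\subset N_\varepsilon A$ for all such $k$. Now the key observation is that each orbit point $x_k=f_{\sigma_k}\circ\cdots\circ f_{\sigma_1}(x_0)$ is one of the points in $F^k(\{x_0\})=\bigcup_{w\in\Sigma^k}f_w(\{x_0\})$, since the word $(\sigma_k,\ldots,\sigma_1)$ is a particular element of $\Sigma^k$. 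Hence $x_k\in N_\varepsilon A$, i.e. $d(x_k,A)<\varepsilon$, for every $k\geq N$.

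From here I would assemble the conclusion using the nested structure. Fix an arbitrary $y\in C_\infty=\bigcap_{K=1}^\infty C_K$ and an arbitrary $\varepsilon>0$. Choosing $N$ as above, we have $y\in C_N=\overline{\bigcup_{k\geq N}\{x_k\}}$, so $y$ is a limit of orbit points $x_k$ with $k\geq N$, each satisfying $d(x_k,A)<\varepsilon$. Since $b\mapsto d(b,A)$ is continuous (being $1$-Lipschitz), it follows that $d(y,A)\leq\varepsilon$. As $\varepsilon>0$ was arbitrary, $d(y,A)=0$, and since $A$ is closed this forces $y\in A$. Therefore $C_\infty\subset A$.

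I do not expect a serious obstacle here, as the argument is essentially a bookkeeping exercise tracking the single orbit inside the full Hutchinson iterates. The one point requiring mild care is the direction of composition: the orbit uses $x_k=f_{\sigma_k}\circ\cdots\circ f_{\sigma_1}(x_0)$, so one must confirm that this composite indeed appears as a summand of $F^k(\{x_0\})$ — it does, since $F^k(\{x_0\})$ collects $f_w(\{x_0\})$ over \emph{all} words $w\in\Sigma^k$ regardless of the order convention. The only genuine input is the definition of attractor and the continuity of the distance function; no use of Lemma \ref{Wandering} is needed for this inclusion, though it is the natural companion giving the reverse flavour of statement.
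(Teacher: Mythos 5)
Your proof is correct. The one step that needs checking is the claim that $x_{k}=f_{\sigma_{k}}\circ\cdots\circ f_{\sigma_{1}}(x_{0})$ belongs to $F^{k}(\{x_{0}\})$, and you address it properly: $F^{k}(\{x_{0}\})$ is the union of $f_{w}(\{x_{0}\})$ over \emph{all} $w\in\Sigma^{k}$, so the reversal of the composition order is immaterial. From there the argument ($d(x_{k},A)<\varepsilon$ for $k\geq N$, then $C_{\infty}\subset C_{N}$ and the $1$-Lipschitz continuity of $d(\cdot,A)$ give $d(y,A)\leq\varepsilon$, then $\varepsilon\downarrow 0$ and closedness of $A$) is airtight. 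Your route differs from the paper's in packaging rather than in substance: the paper factors the argument through Lemma \ref{Wandering} (the characterization of points of $A$ as limits of words applied to $x_{0}$) and then extracts, for each $c\in C_{\infty}$, a convergent subsequence of orbit points, whereas you work directly from the Hausdorff convergence $F^{k}(\{x_{0}\})\to A$ in the definition of attractor and avoid subsequence extraction entirely by using the continuity of the distance function on the closure $C_{N}$. Both proofs hinge on exactly the same key observation — that the single chaos game orbit threads through the full Hutchinson iterates $F^{k}(\{x_{0}\})$ — but your version is more self-contained and slightly cleaner, at the cost of not reusing the lemma the paper has already established for other purposes.
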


\begin{proof}
First, it follows from Lemma \ref{Wandering} that $a\in A$ if, and only if,
there is an infinite subsequence $\left\{  k_{l}\right\}  _{l=1}^{\infty}$ of
$\left\{  k\right\}  _{k=1}^{\infty}$ and $\rho^{(k_{l})}{\in}\Sigma^{k_{l}}$
for $l=0,1,2,...,$ such that
\[
\{f_{{\rho}_{k_{l}}^{(k_{l})}}\circ...\circ f_{{\rho}_{k_{1}}^{(k_{1})}}%
(x_{0})\}_{l=1}^{\infty}%
\]
converges to $a$, namely
\[
\lim_{l\rightarrow\infty}f_{{\rho}_{k_{l}}^{(k_{l})}}\circ...\circ f_{{\rho
}_{k_{1}}^{(k_{1})}}(\{x_{0}\})=a\text{.}%
\]

Second, note that if $c\in C_{\infty}$, then there is an infinite subsequence
$\left\{  k_{m}\right\}  _{m=1}^{\infty}$ of $\left\{  k\right\}
_{k=1}^{\infty}$ such that $\{f_{{{\sigma}_{k_{m}}}}\circ....\circ
f_{{{\sigma}_{1}}}(x_{0})\}_{m=0}^{\infty}$ converges to $c$. By the first
observation, on choosing $\rho^{(k_{l})}={{\sigma}_{k_{l}}...{\sigma}_{1}}$
for $l=1,2,...$, we obtain $c\in A$.
\end{proof}

The following lemma is perhaps suprising.

\begin{lemma}
\label{Lemma4} Let $A$ be an attractor of $F$, let $\mathcal{B}(A)$ be the
basin of $A$, let $\sigma\in\Sigma^{\infty}$ and let $x_{0}\in\mathcal{B}(A).$
We have%
\[
F\left(  C_{\infty}(x_{0},\sigma)\right)  :=\bigcup\limits_{f\in F}%
f(C_{\infty}(x_{0},\sigma))\supset C_{\infty}(x_{0},\sigma)
\]

\end{lemma}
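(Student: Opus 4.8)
The plan is to prove the equivalent inclusion $C_\infty \subset F(C_\infty)$ by showing that every point $c \in C_\infty$ can be written as $f_\tau(c')$ for some symbol $\tau \in \Sigma$ and some $c' \in C_\infty$. First I would unwind the definition $C_\infty = \bigcap_{K=1}^\infty C_K$: since $c$ lies in the closure $C_K = \overline{\{x_k : k \geq K\}}$ for every $K$, a routine extraction (choosing for each $K$ an index $k_K \geq K$ with $d(x_{k_K},c) < 1/K$) produces a strictly increasing sequence $(k_m)_{m=1}^\infty$ with $k_m \to \infty$ and $x_{k_m} \to c$.

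The key combinatorial step exploits the finiteness of the alphabet $\Sigma$. Recalling the recursion $x_{k_m} = f_{\sigma_{k_m}}(x_{k_m-1})$, I would apply the pigeonhole principle to the symbols $\sigma_{k_m} \in \Sigma$: passing to a subsequence (not relabelled), I may assume $\sigma_{k_m} = \tau$ for one fixed symbol $\tau$ and all $m$, so that $x_{k_m} = f_\tau(x_{k_m-1})$. Next, the predecessors $x_{k_m-1}$ all lie in the totally bounded set $\{x_k\}_{k=0}^\infty$, whose closure $C_0$ is compact because $X$ is complete; hence, after a further passage to a subsequence, $x_{k_m-1} \to c'$ for some $c' \in X$.

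It remains to check $c' \in C_\infty$ and to pass to the limit. Since $k_m \to \infty$, for any fixed $K$ we have $k_m - 1 \geq K$ for all large $m$, so the tail of $(x_{k_m-1})_m$ lies in $\{x_k : k \geq K\}$; as $C_K$ is closed, the limit $c'$ belongs to $C_K$. This holds for every $K$, giving $c' \in \bigcap_{K=1}^\infty C_K = C_\infty$. Finally, continuity of $f_\tau$ yields $f_\tau(x_{k_m-1}) \to f_\tau(c')$; but the left-hand side equals $x_{k_m}$, which converges to $c$, so $c = f_\tau(c') \in f_\tau(C_\infty) \subset F(C_\infty)$, as required.

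I expect the only genuinely delicate point to be confirming that the limit $c'$ of the predecessor subsequence again lands in $C_\infty$ rather than merely in $C_0$; this is where the observation that $k_m - 1$ still diverges, combined with the closedness of each $C_K$, does the real work. The finiteness of $\Sigma$, which guarantees a constant repeated last symbol $\tau$, is the other essential ingredient and is exactly what would break down for an infinite alphabet.
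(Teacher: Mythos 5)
Your proof is correct, but it proceeds by a genuinely different route from the paper's. The paper argues entirely at the level of the hyperspace $(\mathcal{K}(X),h)$: it first establishes the set inclusion $F(C_{K}(x_{0},\sigma))\supset C_{K+1}(x_{0},\sigma)$ directly from the definition of $C_{K}$ as a closure of a tail of the orbit, and then passes to the limit $K\rightarrow\infty$ using the continuity of the Hutchinson operator $F:\mathcal{K}(X)\rightarrow\mathcal{K}(X)$ (a fact the paper imports from \cite{BarnsleyLesniak}) together with the convergence of the nested sequences $C_{K}\downarrow C_{\infty}$ and $F(C_{K})\downarrow F(C_{\infty})$. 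You instead work pointwise: given $c\in C_{\infty}$ you extract a subsequence $x_{k_{m}}\rightarrow c$, use the pigeonhole principle on the finite alphabet to fix the last symbol $\tau$, use total boundedness of the orbit to extract a convergent subsequence of predecessors $x_{k_{m}-1}\rightarrow c^{\prime}$, verify $c^{\prime}\in C_{\infty}$ via closedness of each $C_{K}$ and the divergence of $k_{m}-1$, and conclude by continuity of the single map $f_{\tau}$. Your argument is more elementary and self-contained -- it needs only continuity of the individual maps $f_{\sigma}$ and compactness of $C_{0}$, not continuity of $F$ on the hyperspace nor the preservation of inclusions under Hausdorff limits -- at the cost of being somewhat longer; both arguments rely essentially on the finiteness of $\Sigma$ (the paper's when commuting a finite union with a closure, yours in the pigeonhole step) and on the total boundedness of the orbit that the paper records just before Lemma \ref{Lemma1}. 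Your closing remarks correctly identify the two delicate points.
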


\begin{proof}
We have%
\begin{align*}
F(C_{K}(x_{0},\sigma))  &  =\bigcup\limits_{f\in F}f(C_{K}(x_{0},\sigma))\\
&  =\bigcup\limits_{f\in F}f({\overline{\bigcup_{k=K}^{\infty}\,\{f_{{{\sigma
}_{k}}}\circ....\circ f_{{{\sigma}_{1}}}(x_{0})\}}})\\
&  =\bigcup\limits_{f\in F}{\overline{\bigcup_{k=K}^{\infty}\,\{f\circ
f_{{{\sigma}_{k}}}\circ....\circ f_{{{\sigma}_{1}}}(x_{0})\}}}\\
&  \supset C_{K+1}(x_{0},\sigma)
\end{align*}

We know that $F:\mathcal{K}(X)\rightarrow\mathcal{K}(X)$ is continuous. Taking
limits of decreasing sequences, we obtain%
\[
F(C_{\infty}(x_{0},\sigma))\supset C_{\infty}(x_{0},\sigma).
\]

\end{proof}

In summary, so far, we have that for all $x_{0}\in\mathcal{B}(A)$, for all
$\sigma\in\Sigma^{\infty},$
\[
C_{\infty}(x_{0},{\sigma)}\subset F(C_{\infty}(x_{0},{\sigma)})\subset A.
\]

\begin{lemma}
\label{Lemma2} Let $A$ be an attractor of $F$, let $\mathcal{B}(A)$ be the
basin of $A$, let $x_{0}\in\mathcal{B}(A)$, $\sigma\in\Sigma^{\infty}$, and
let $\theta_{1}\theta_{2}...\theta_{P}\in\Sigma^{P}$ for some $P\in
\{1,2,...\}$. If%
\[
{{\sigma}_{M+1}...{\sigma}_{M+P}=}\theta_{1}\theta_{2}...\theta_{P}%
\]
for infinitely many distinct positive integers $M$, then
\[
f_{\theta_{P}}\circ...\circ f_{\theta_{1}}(C_{\infty}(x_{0},{\sigma))\cap
}C_{\infty}(x_{0},{\sigma)\neq\emptyset}\text{.}%
\]

\end{lemma}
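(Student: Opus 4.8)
The plan is to exploit the fact that $C_{\infty}(x_{0},\sigma)$ is precisely the set of cluster points of the orbit $(x_{k})_{k=0}^{\infty}$, combined with the continuity of the composite map associated to the word $\theta_{1}\theta_{2}\ldots\theta_{P}$. Write $g:=f_{\theta_{P}}\circ\cdots\circ f_{\theta_{1}}$. The point of the hypothesis is the identity $x_{M+P}=g(x_{M})$, valid whenever ${\sigma}_{M+1}\ldots{\sigma}_{M+P}=\theta_{1}\ldots\theta_{P}$, since $x_{M+P}=f_{{\sigma}_{M+P}}\circ\cdots\circ f_{{\sigma}_{M+1}}(x_{M})$ and the letters match.

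First I would use the hypothesis to fix the infinite set of indices $M$ at which the pattern occurs. Since $\{x_{k}\}_{k=0}^{\infty}$ is totally bounded, its closure is compact, so along this infinite index set I can extract a subsequence $M_{j}\rightarrow\infty$ for which $x_{M_{j}}\rightarrow c$ for some $c\in X$. Because $M_{j}\rightarrow\infty$, for every fixed $K$ the tail $\{x_{k}:k\geq K\}$ contains $x_{M_{j}}$ for all large $j$; hence $c\in C_{K}(x_{0},\sigma)$ for every $K$, and therefore $c\in C_{\infty}(x_{0},\sigma)$.

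Next I would push this limit forward through $g$. By continuity of $g$ we have $x_{M_{j}+P}=g(x_{M_{j}})\rightarrow g(c)$, so $g(c)\in g(C_{\infty}(x_{0},\sigma))$. On the other hand, the shifted indices $M_{j}+P$ also tend to infinity, so the identical tail argument shows that the limit $g(c)$ of the subsequence $(x_{M_{j}+P})_{j}$ lies in $C_{K}(x_{0},\sigma)$ for every $K$, that is, $g(c)\in C_{\infty}(x_{0},\sigma)$. Consequently $g(c)$ belongs to both $g(C_{\infty}(x_{0},\sigma))=f_{\theta_{P}}\circ\cdots\circ f_{\theta_{1}}(C_{\infty}(x_{0},\sigma))$ and $C_{\infty}(x_{0},\sigma)$, which is exactly the asserted nonempty intersection.

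The only delicate point is ensuring that both limit memberships land in $C_{\infty}$ rather than merely in the full closure $C_{0}$; this is handled by the observation that a subsequential limit taken along indices tending to infinity automatically belongs to every $C_{K}$, and hence to $\bigcap_{K}C_{K}=C_{\infty}$. Everything else is a routine application of compactness (to get the convergent subsequence) and continuity (to transport the limit through $g$).
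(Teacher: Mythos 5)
Your proof is correct, and it takes a genuinely different route from the paper's. The paper never extracts a convergent subsequence; instead it works at the level of the sets $C_{K}$, showing that $f_{\theta_{1}}(C_{K})\cap C_{K+L}$ contains the closure of $\{x_{k}: k\geq K+L,\ \sigma_{k}=\theta_{1}\}$ and is therefore nonempty, and then passes to the limit twice (first in $L$, then in $K$) using the fact that a decreasing nested sequence of nonempty compact sets has nonempty intersection; the case of general $P$ is handled by the remark that one should replace $f_{\theta_{1}}$ by $f_{\theta_{P}}\circ\cdots\circ f_{\theta_{1}}$ and adjust accordingly. You instead use sequential compactness of the orbit closure to produce an explicit witness: a cluster point $c$ of $(x_{M_{j}})$ along the occurrence indices, for which $g(c)$ lies in both $g(C_{\infty})$ and $C_{\infty}$ because $x_{M_{j}+P}=g(x_{M_{j}})$ and subsequential limits along indices tending to infinity automatically fall into every $C_{K}$. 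The two arguments are of course cousins (nested compact sets versus sequential compactness), but yours is more direct, names the intersection point concretely, treats arbitrary $P$ uniformly from the outset, and avoids the paper's reliance on the identity $f(\overline{S})=\overline{f(S)}$ for compact closures; the paper's version, on the other hand, stays entirely within the hyperspace formalism of the $C_{K}$'s that the surrounding lemmas also use, which makes the limiting steps mesh with Lemma \ref{Lemma4} and the continuity of the Hutchinson operator. Both are valid; your handling of the one delicate point (landing in $C_{\infty}$ rather than merely $C_{0}$) is exactly right.
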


\begin{proof}
Let $P=1$. We have, for all positive integers $K$ and $L$,
\begin{align}
f_{\theta_{1}}(C_{K}(x_{0},\sigma))\cap C_{K+L}(x_{0},\sigma)  &
=f_{\theta_{1}}({\overline{\bigcup_{k=K}^{\infty}\,\{f_{{{\sigma}_{k}}}%
\circ....\circ f_{{{\sigma}_{1}}}(x_{0})\}}})\cap({\overline{\bigcup
_{k=K+L}^{\infty}\,\{f_{{{\sigma}_{k}}}\circ....\circ f_{{{\sigma}_{1}}}%
(x_{0})\}}})\label{formula}\\
&  =({\overline{\bigcup_{k=K}^{\infty}\,\{f_{\theta_{1}}\circ f_{{{\sigma}%
_{k}}}\circ....\circ f_{{{\sigma}_{1}}}(x_{0})\}}})\cap({\overline
{\bigcup_{k=K+L}^{\infty}\,\{f_{{{\sigma}_{k}}}\circ....\circ f_{{{\sigma}%
_{1}}}(x_{0})\}}})\nonumber\\
&  \supset{\overline{\bigcup_{\substack{k\in\{K+L,...\}\\s.t.{{\sigma}_{k}%
=}\theta_{1}}}^{\infty}\,\{f_{{{\sigma}_{k}}}\circ....\circ f_{{{\sigma}_{1}}%
}(x_{0})\}}}).\nonumber
\end{align}
The last expression is nonempty because ${{\sigma}_{k}=}\theta_{1}$ for
infinitely many values of $k$. It follows that $\left\{  f_{\theta_{1}}%
(C_{K})\cap C_{K+L}\right\}  _{L=1}^{\infty}$ is a decreasing sequence of
nonempty compact sets. It converges to a nonempty compact set and it converges
to $f_{\theta_{1}}(C_{K})\cap C_{\infty}$ so%
\[
f_{\theta_{1}}(C_{K})\cap C_{\infty}\neq\emptyset
\]
for all $K=1,2,...$. But now $\left\{  f_{\theta_{1}}(C_{K})\cap C_{\infty
}\right\}  _{K=1}^{\infty}$ is a decreasing sequence of nonempty sets and it
converges to
\[
f_{\theta_{1}}(C_{\infty})\cap C_{\infty}\neq\emptyset.
\]
This proves the result for the case $P=1$. For the general case, replace
$f_{\theta_{1}}$ by $f_{\theta_{P}}\circ...\circ f_{\theta_{1}}$ and adjust
the expressions in (\ref{formula}) accordingly.
\end{proof}

We say that the infinite word $\sigma=({\sigma}_{1},{\sigma}_{2},{\ldots}%
)\in{\Sigma}^{\infty}$ is \textbf{disjunctive} (\cite{DisjunctiveSeq,
Staiger}) if it contains all possible finite words i.e.
\[
\forall_{m}\;\forall_{w\in{\Sigma}^{m}}\;\exists_{j}\;\forall_{l=1,{\ldots}%
,m}\;\;\sigma_{(j-1)+l}=w_{l}.
\]

In fact any finite word appears in a disjunctive sequence of symbols
infinitely often, because it reappears as part of longer and longer words.

\begin{proposition}
The sequence $(\sigma_{n})_{n=1}^{\infty}\in{\Sigma}^{\infty}$ is disjunctive
if and only if
\begin{equation}
\forall_{n,m}\;\forall_{({\tau}_{1},{\tau}_{2},{\ldots},{\tau}_{m})\in{\Sigma
}^{m}}\;\exists_{k\geq n}\;\forall_{l=1,{\ldots},m}\;\;{\tau}_{l}={\sigma
}_{k+l}. \label{complexword}%
\end{equation}

\end{proposition}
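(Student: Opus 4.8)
The plan is to prove the two implications separately, observing first that condition~\eqref{complexword} is on its face the stronger of the two: it demands that every finite word occur with \emph{arbitrarily late} starting index, not merely that it occur somewhere. So the easy direction is to recover the bare definition of disjunctiveness from~\eqref{complexword}, and the substantive direction is the converse. For the easy direction I would assume~\eqref{complexword} and simply specialize the quantifier over $n$. Given $m$ and $w\in{\Sigma}^{m}$, applying~\eqref{complexword} with $\tau=w$ and $n=1$ produces some $k\geq1$ with $\sigma_{k+l}=w_{l}$ for $l=1,\ldots,m$; setting $j=k+1$ gives $\sigma_{(j-1)+l}=\sigma_{k+l}=w_{l}$, which is precisely the defining clause for disjunctiveness. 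The only thing to watch here is reconciling the two indexing conventions, namely $\sigma_{(j-1)+l}$ in the definition versus $\sigma_{k+l}$ in~\eqref{complexword}.

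For the substantive direction, disjunctive $\Rightarrow$~\eqref{complexword}, the key idea is the \emph{left-padding} trick, which makes precise the informal remark that a word ``reappears as part of longer and longer words.'' Fix $n$, $m$, and a target word $\tau=(\tau_{1},\ldots,\tau_{m})$. I would pick any symbol $a\in\Sigma$ and form the padded word $w=(a,\ldots,a,\tau_{1},\ldots,\tau_{m})\in{\Sigma}^{n+m}$, consisting of $n$ copies of $a$ followed by $\tau$. Applying the definition of disjunctiveness to $w$, which has length $n+m$, yields a starting position $j\geq1$ with $\sigma_{(j-1)+l}=w_{l}$ for all $l=1,\ldots,n+m$. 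Reading off only the last $m$ coordinates (those with $l=n+1,\ldots,n+m$) shows that $\tau$ occurs starting at position $j+n$; equivalently, setting $k=j+n-1$ gives $\sigma_{k+l}=\tau_{l}$ for $l=1,\ldots,m$. Crucially, since $j\geq1$ we get $k=j+n-1\geq n$, which is exactly what~\eqref{complexword} requires.

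The main obstacle, such as it is, is purely bookkeeping rather than conceptual: one must track the index shifts carefully to confirm that $k=j+n-1$ satisfies $k\geq n$, and to see that the padding length must be at least $n$ (padding of length $p<n$ would only force the occurrence of $\tau$ to start at position $\geq p+1$, which need not exceed $n$). I would also note in passing that $\Sigma\neq\emptyset$, so the padding symbol $a$ exists; this is automatic for an IFS. No compactness or topology enters at all: the statement is a purely combinatorial reformulation of disjunctiveness, and the padding argument is essentially its entire content.
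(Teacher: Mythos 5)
Your proof is correct, and it is exactly the formalization of the paper's own (unproved) hint that a word ``reappears as part of longer and longer words'': the left-padding of $\tau$ by $n$ dummy symbols, together with the bookkeeping $k=j+n-1\geq n$, is precisely what that remark intends, and your index checks in both directions are accurate. Nothing further is needed.
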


\begin{example}
\label{champernowne}(Champernowne sequence). Let us write down finite words
over the alphabet $\Sigma$: first the one-letter words, second two-letter
words etc. An infinite word made by concatenating this list creates a
disjunctive sequence of symbols in ${\Sigma}^{\infty}$, a \textbf{Champernowne
sequence}. Note that all normal sequences are disjunctive but the converse is
not true.
\end{example}

Applications of disjunctive sequences in complexity, automata theory and
number theory are described in the papers cited in \cite{DisjunctiveSeq}.

What does disjunctiveness give us? Let $\mathcal{S}_{F}$ denote the semigroup
of continuous functions from $X$ to itself, generated by $F$. That is%
\[
\mathcal{S}_{F}:=\{f_{\sigma_{1}}\circ...\circ f_{\sigma_{k}}:k\in\left\{
1,2,...\right\}  ,\sigma_{1}...\sigma_{k}\in\Sigma^{k}\}
\]
where the semigroup operation is function composition.

\begin{lemma}
\label{Lemma3}Let $A$ be an attractor of $F$, let $\mathcal{B}(A)$ be the
basin of $A$, let $x_{0}\in B,$ and let $\sigma\in\Sigma^{\infty}$ be
disjunctive. If $f\in\mathcal{S}_{F}$, then
\[
f(C_{\infty}(x_{0},\sigma))\cap C_{\infty}(x_{0},\sigma)\neq\emptyset.
\]

\end{lemma}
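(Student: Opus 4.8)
The plan is to read Lemma \ref{Lemma3} as an immediate corollary of Lemma \ref{Lemma2} combined with the defining property of disjunctive words. By definition of the semigroup, any $f \in \mathcal{S}_F$ is a finite composition $f = f_{\eta_1}\circ\cdots\circ f_{\eta_P}$ for some word $\eta_1\cdots\eta_P \in \Sigma^P$. The first thing I would do is match the order of composition to the shape that appears in the conclusion of Lemma \ref{Lemma2}: setting $\theta_j := \eta_{P+1-j}$, so that the word $\theta_1\cdots\theta_P = \eta_P\eta_{P-1}\cdots\eta_1$ is the reversal of $\eta_1\cdots\eta_P$, one has $f = f_{\theta_P}\circ\cdots\circ f_{\theta_1}$, which is precisely the composition whose intersection with $C_\infty(x_0,\sigma)$ is governed by Lemma \ref{Lemma2}.

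Next I would invoke disjunctiveness to supply the recurrence hypothesis that Lemma \ref{Lemma2} assumes. By the remark following the definition of a disjunctive word, every finite word over $\Sigma$ occurs as a block of $\sigma$ infinitely often; applied to the reversed word $\theta_1\cdots\theta_P$, this yields infinitely many distinct positive integers $M$ with ${\sigma}_{M+1}\cdots{\sigma}_{M+P}=\theta_1\cdots\theta_P$. This is exactly the hypothesis required to invoke Lemma \ref{Lemma2}.

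With the hypothesis verified, I would apply Lemma \ref{Lemma2} directly to obtain
\[
f(C_\infty(x_0,\sigma))\cap C_\infty(x_0,\sigma) = f_{\theta_P}\circ\cdots\circ f_{\theta_1}(C_\infty(x_0,\sigma))\cap C_\infty(x_0,\sigma)\neq\emptyset,
\]
which is the assertion of the lemma.

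I do not expect any serious difficulty, since the analytic content is carried entirely by Lemma \ref{Lemma2}: the present statement merely promotes a repeating block, which must be \emph{assumed} in Lemma \ref{Lemma2}, to an arbitrary block, which disjunctiveness supplies \emph{free of charge}. The one point demanding genuine care is the reversal of the word when passing between the semigroup element $f = f_{\eta_1}\circ\cdots\circ f_{\eta_P}$ and the composition $f_{\theta_P}\circ\cdots\circ f_{\theta_1}$ of Lemma \ref{Lemma2}; keeping the index convention $\theta_j = \eta_{P+1-j}$ straight is what makes the application correct, and it is the only step where an error could creep in.
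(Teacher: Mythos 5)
Your proposal is correct and is exactly the paper's argument: the paper disposes of this lemma in one sentence as "an immediate consequence of Lemma \ref{Lemma2} combined with disjunctiveness of $\sigma$," and you have simply filled in the details (writing $f\in\mathcal{S}_F$ as a composition, reversing the word to match the form $f_{\theta_P}\circ\cdots\circ f_{\theta_1}$ in Lemma \ref{Lemma2}, and using the fact that every finite word occurs infinitely often in a disjunctive sequence to supply its hypothesis). The attention you pay to the order-of-composition issue is appropriate, though ultimately harmless either way, since disjunctiveness supplies every finite word, reversed or not.
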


\begin{proof}
This is an immmediate consequence of Lemma \ref{Lemma2} combined with
disjunctiveness of $\sigma$.
\end{proof}

\begin{theorem}
\label{Theorem1}Let $A$ be an attractor of $F$, let $\mathcal{B}(A)$ be the
basin of $A$, let $x_{0}\in\mathcal{B}(A)$ and let $\sigma\in\Sigma^{\infty}$
be disjunctive. The set  $C_{\infty}(x_{0},{\sigma)}$ intersects every fibre
of $A$; that is,
\[
A_{{{\rho}}}\cap C_{\infty}(x_{0},{\sigma)\neq\emptyset}%
\]
for all ${{\rho\in}}\Sigma^{\infty}$.
\end{theorem}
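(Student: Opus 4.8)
The plan is to realize each fibre $A_{\rho}$ as the intersection of a nested sequence of compacta and to show that every member of that sequence already meets $C_{\infty}(x_{0},\sigma)$; a finite-intersection argument then forces the limit to be nonempty. First I would fix $\rho=(\rho_{1},\rho_{2},\ldots)\in\Sigma^{\infty}$ and set $B_{K}:=f_{\rho_{1}}\circ\cdots\circ f_{\rho_{K}}(A)$. Since $A=F(A)\supset f_{\rho_{K+1}}(A)$, one has $B_{K+1}\subset B_{K}$, so $\{B_{K}\}_{K=1}^{\infty}$ is a decreasing sequence of nonempty compacta, and by the definition of the coordinate map $\bigcap_{K=1}^{\infty}B_{K}=A_{\rho}$.

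The crucial step is to invoke the self-intersection property of $C_{\infty}$. For each $K$ the finite composition $g_{K}:=f_{\rho_{1}}\circ\cdots\circ f_{\rho_{K}}$ belongs to the semigroup $\mathcal{S}_{F}$, so Lemma \ref{Lemma3}, which is exactly where disjunctiveness of $\sigma$ enters, yields $g_{K}(C_{\infty}(x_{0},\sigma))\cap C_{\infty}(x_{0},\sigma)\neq\emptyset$. Combining this with Lemma \ref{Lemma1}, namely $C_{\infty}(x_{0},\sigma)\subset A$, and the monotonicity of $g_{K}$ on subsets, I obtain
\[
\emptyset\neq g_{K}(C_{\infty}(x_{0},\sigma))\cap C_{\infty}(x_{0},\sigma)\subset g_{K}(A)\cap C_{\infty}(x_{0},\sigma)=B_{K}\cap C_{\infty}(x_{0},\sigma).
\]
Hence each set $D_{K}:=B_{K}\cap C_{\infty}(x_{0},\sigma)$, being the intersection of two compacta, is nonempty and compact.

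Finally, since $\{B_{K}\}$ is decreasing, so is $\{D_{K}\}$, and the finite intersection property for nested nonempty compacta gives $\bigcap_{K=1}^{\infty}D_{K}\neq\emptyset$. As $\bigcap_{K=1}^{\infty}D_{K}=\left(\bigcap_{K=1}^{\infty}B_{K}\right)\cap C_{\infty}(x_{0},\sigma)=A_{\rho}\cap C_{\infty}(x_{0},\sigma)$, this is precisely the assertion of the theorem, and it holds for arbitrary $\rho$. The one place that demands care is the chain of inclusions that converts the abstract self-intersection statement of Lemma \ref{Lemma3} into a statement about the pre-fibres $B_{K}$: it is essential to apply Lemma \ref{Lemma3} to the specific prefix maps $g_{K}$ rather than to an arbitrary element of $\mathcal{S}_{F}$, and to use $C_{\infty}(x_{0},\sigma)\subset A$ so that intersecting with $C_{\infty}(x_{0},\sigma)$ is preserved when the inner copy of $C_{\infty}(x_{0},\sigma)$ is enlarged to $A$. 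Everything that follows is a routine nested-compactness passage to the limit, so there is no genuine analytic obstacle once Lemmas \ref{Lemma1} and \ref{Lemma3} are available.
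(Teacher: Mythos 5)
Your proof is correct and follows essentially the same route as the paper's: both realize $A_{\rho}$ as the decreasing intersection of the compacta $f_{\rho_{1}}\circ\cdots\circ f_{\rho_{K}}(A)$, use $C_{\infty}\subset A$ (Lemma \ref{Lemma1}) to pass from $g_{K}(C_{\infty})\cap C_{\infty}\neq\emptyset$ to $g_{K}(A)\cap C_{\infty}\neq\emptyset$, and conclude by nested compactness. Your explicit appeal to Lemma \ref{Lemma3} is in fact the correct citation (the paper's reference to Lemma \ref{Lemma4} at that point appears to be a slip, since the self-intersection property under disjunctiveness is exactly Lemma \ref{Lemma3}).
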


\begin{proof}
We have
\begin{align*}
C_{\infty}\cap A_{{{\rho}}} &  =C_{\infty}\cap\lim_{K\rightarrow\infty
}f_{{{\rho}_{1}}}\circ....\circ f_{{{\rho}_{K}}}(A)\\
&  =C_{\infty}\cap\bigcap\limits_{K=1}^{\infty}f_{{{\rho}_{1}}}\circ....\circ
f_{{{\rho}_{K}}}(A)\text{ because decreasing,}\\
&  =\bigcap\limits_{K=1}^{\infty}(C_{\infty}\cap f_{{{\rho}_{1}}}%
\circ....\circ f_{{{\rho}_{K}}}(A))\text{ easily checked,}%
\end{align*}
But, since $C_{\infty}\subset A$ by Lemma \ref{Lemma1}, we have
\[
C_{\infty}\cap f_{{{\rho}_{1}}}\circ....\circ f_{{{\rho}_{K}}}(A)\supset
C_{\infty}\cap f_{{{\rho}_{1}}}\circ....\circ f_{{{\rho}_{K}}}(C_{\infty})
\]
for all $K$. Also, by Lemma \ref{Lemma4} and the assumption that $\sigma$ is
disjunctive, we have%
\[
C_{\infty}\cap f_{{{\rho}_{1}}}\circ....\circ f_{{{\rho}_{K}}}(C_{\infty}%
)\neq\emptyset
\]
for all $K$. It follows that $\left\{  C_{\infty}\cap f_{{{\rho}_{1}}}%
\circ....\circ f_{{{\rho}_{K}}}(A)\right\}  $ is a decreasing (nested)
sequence of non-empty compact sets. It follows that
\[
C_{\infty}\cap A_{{{\rho}}}=C_{\infty}\cap\bigcap\limits_{K=1}^{\infty
}f_{{{\rho}_{1}}}\circ....\circ f_{{{\rho}_{K}}}(A)\neq\emptyset\text{.}%
\]

\end{proof}

This says that, given any fibre $A_{\rho}$ of an attractor, there exists $p\in
A_{\rho}$ and a subsequence of $\left\{  x_{k}\right\}  $ that converges to
$p$.

\begin{corollary}
\label{Corollary1}Let $A$ be an attractor of $F$, let $B$ be the basin of $A$,
let $x_{0}\in\mathcal{B}(A)$ and let $\sigma\in\Sigma^{\infty}$ be
disjunctive. If $A$ is strongly-fibred, then
\[
C_{\infty}(x_{0},{\sigma)=A.}%
\]
That is, the tails of the random orbit
\begin{equation}
\{x_{n}\,:\,n\geq p\}\underset{p\rightarrow\infty}{\longrightarrow
}A\label{tailconverg}%
\end{equation}
converge to the attractor with respect to the Hausdorff distance, and
\begin{equation}
A=\bigcap_{p=1}^{\infty}{\overline{\bigcup_{n=p}^{\infty}\,\{x_{n}\}}%
}.\label{fillattractor}%
\end{equation}

\end{corollary}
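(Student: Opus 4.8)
The plan is to establish the reverse inclusion $A\subset C_\infty(x_0,\sigma)$, since Lemma \ref{Lemma1} already supplies $C_\infty(x_0,\sigma)\subset A$; the equality $C_\infty=A$ will then give both displayed identities almost for free. Throughout I abbreviate $C_\infty=C_\infty(x_0,\sigma)$ and recall that $C_\infty$ is nonempty and compact, and that $C_K=\overline{\bigcup_{k=K}^\infty\{x_k\}}$ decreases to $C_\infty$ in the Hausdorff metric, as established in the setup of this section.

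For the inclusion I would argue by contradiction. Suppose $C_\infty\neq A$; since $C_\infty\subset A$, there is a point $a\in A\setminus C_\infty$. As $C_\infty$ is compact, hence closed, and $a\notin C_\infty$, we have $\delta:=d(a,C_\infty)>0$, so the open set $U:=N_\delta\{a\}$ satisfies $a\in U$ and $U\cap C_\infty=\emptyset$. The key step is to feed $U$ into the strongly-fibred hypothesis by enlarging it to an open cover of $A$: take $\mathcal{U}:=\{U,\,X\setminus\{a\}\}$. Both members are open in $X$, and their union is all of $X$, so $\mathcal{U}$ is an open cover of $A$ with $U\in\mathcal{U}$. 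Because $A$ is strongly-fibred, there exists $\rho\in\Sigma^\infty$ with $A_\rho\subset U$. On the other hand, Theorem \ref{Theorem1} — which is exactly where the disjunctiveness of $\sigma$ and the hypothesis $x_0\in\mathcal{B}(A)$ are consumed — guarantees $A_\rho\cap C_\infty\neq\emptyset$. This is impossible, since $A_\rho\subset U$ forces $A_\rho\cap C_\infty\subset U\cap C_\infty=\emptyset$. The contradiction gives $A\subset C_\infty$, whence $C_\infty=A$.

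Finally I would read off the two displayed conclusions. Equation \eqref{fillattractor} is just the definition $C_\infty=\bigcap_{p=1}^\infty\overline{\bigcup_{n=p}^\infty\{x_n\}}$ with the established value $C_\infty=A$ substituted on the left. Equation \eqref{tailconverg} is the Hausdorff convergence $C_p\to C_\infty$ of the nested compacta, already recorded in this section, now reinterpreted through $C_\infty=A$. I expect the only genuine obstacle to be the short cover-construction in the middle paragraph: one must confirm that $\{U,X\setminus\{a\}\}$ is a legitimate open cover and that strong-fibredness, applied to the distinguished member $U$, produces a fibre lying \emph{entirely} inside $U$, so that it collides with the intersection property of Theorem \ref{Theorem1}. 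Everything else is formal. Note in particular that no separate use of $x_0\in\mathcal{B}(A)$ or of disjunctiveness is needed here beyond their role inside Lemma \ref{Lemma1} and Theorem \ref{Theorem1}.
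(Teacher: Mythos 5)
Your proof is correct and rests on exactly the same two ingredients as the paper's: strong-fibredness places a fibre $A_\rho$ inside a small ball around an arbitrary point of $A$, and Theorem \ref{Theorem1} forces $C_\infty$ to meet that fibre, yielding $A\subset C_\infty$. The only cosmetic difference is that you argue by contradiction at a single point $a\in A\setminus C_\infty$, whereas the paper argues directly that $C_\infty$ is $\varepsilon$-dense in $A$ for every $\varepsilon>0$ and hence, being closed, contains $A$.
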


\begin{proof}
Let $\mathcal{U}$ be a cover by balls of radius epsilon. Since $A$ is
strongly-fibred, for each $U\in\mathcal{U}$ there is $\rho\in\Sigma^{\infty}$
such that $A_{\rho}\subset U$. Hence, a point of $C_{\infty}(x_{0},{\sigma)}$
lies in each $U\in\mathcal{U}$, by Theorem \ref{Theorem1}. It readily follows
that $C_{\infty}(x_{0},{\sigma)\supset A}$. But $C_{\infty}(x_{0}%
,{\sigma)\subset A}$; hence $C_{\infty}(x_{0},{\sigma)=A}$.
\end{proof}

Note that Theorem \ref{Theorem1} is stronger than Corollary \ref{Corollary1}.

Here we digress slightly from our main themes to reflect on the name "chaos
game", since the process underlying the chaos game algorithm can be purely
deterministic and does not need to be related in any way to ergodicity (e.g.
Example~\ref{ergodicisnotenough}). In dynamical systems theory the "furthest
island" of stability is usually considered to be almost periodic behaviour,
after stationary, periodic and quasi-periodic; beyond quasi-periodicity is the
"ocean" of chaos. Following \cite{AlmostPeriodic} we recall that an infinite
sequence of symbols $\varsigma$ is \textbf{almost periodic} (or {uniformly
recursive}) if, given any finite word $\tau$ that occurs in $\varsigma$
infinitely often we can associate a positive integer $m$ such that any segment
in $\varsigma$ of length $m$ contains $\tau$ as a substring. Obviously a
disjunctive sequence cannot be almost periodic. Therefore the descriptive term
"chaos game" retains its interpretation.

\section{\label{categorysec}Categorial analysis}

Subset $\Psi\subset M$ of a metric space $M$ is called \textbf{porous} when%
\begin{equation}
\exists_{0<{\lambda}^{\prime}<1}\;\exists_{r_{0}>0}\;\forall_{\psi\in\Psi
}\;\forall_{0<r<r_{0}}\;\exists_{\upsilon\in M}\;\;N_{{\lambda}^{\prime}%
r}\{\upsilon\}\subset N_{r}\{\psi\}\setminus\Psi. \label{porous}%
\end{equation}
A countable union of porous sets is said to be $\sigma$\textbf{-porous}. A
subset of a $\sigma$-porous set is $\sigma$-porous.

Note that every $\sigma$-porous set is of the first Baire category and that
this is a proper inclusion. Moreover every $\sigma$-porous subset of euclidean
space has null Lebesgue measure. In general metric spaces one can also relate
the ideal of porous sets to the ideal of null sets under suitable assumptions.
We quote such a result next and then show its natural application in
Example~\ref{BarnVince}.

\begin{theorem}
[\cite{PorosityMeas} Propositions 3.5 \& 3.3]\label{PorousIsNull} Let $\mu$ be
the completion of a Borel regular probability measure on a separable metric
space $M$ which satisfies the doubling condition
\begin{equation}
\exists_{r_{0},c>0}\;\forall_{\psi\in M}\;\forall_{0<r<r_{0}}\;\;\mu
(N_{2r}\{\psi\})\leq c\cdot\mu(N_{r}\{\psi\}). \label{doublingmeas}%
\end{equation}
If $\Psi\subset M$ is $\sigma$-porous set, then it is null $\mu(\Psi)=0$.
\end{theorem}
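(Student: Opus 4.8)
The plan is to reduce to a single closed porous set and then show that every ball centred on it loses a fixed proportion of its mass to the complement, which is incompatible with having positive measure. First I would use that a countable union of null sets is null and that, because $\mu$ is complete, every subset of a null set is measurable and null; since a $\sigma$-porous set is by definition a countable union of porous sets, it suffices to prove $\mu(\Psi)=0$ for a single porous $\Psi$, with constants $\lambda'$ and $r_{0}$ as in \eqref{porous}. I would then replace $\Psi$ by its closure: if $N_{\lambda' r}\{\upsilon\}\subset N_{r}\{\psi\}\setminus\Psi$ is a porosity hole, then this open ball, being disjoint from $\Psi$, is automatically disjoint from $\overline{\Psi}$ (an open set meeting no point of $\Psi$ contains no limit point of $\Psi$), so after an arbitrarily small shrinkage to re-centre the enclosing ball at a point of $\overline{\Psi}$ one checks $\overline{\Psi}$ is porous with a constant as close to $\lambda'$ as desired. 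Hence we may assume $\Psi$ is closed, so Borel and $\mu$-measurable. Finally, since $M$ is separable the complement of $\operatorname{supp}\mu$ is null, so I may discard $\Psi\setminus\operatorname{supp}\mu$ and assume $\mu(N_{r}\{x\})>0$ for every $x\in\Psi$ and $r>0$.

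The heart of the argument is a local mass estimate. Fix $x\in\Psi$ and $0<r<r_{0}/2$. Porosity supplies $\upsilon\in M$ with $N_{\lambda' r}\{\upsilon\}\subset N_{r}\{x\}\setminus\Psi$, and in particular $d(x,\upsilon)<r$, so that $N_{r}\{x\}\subset N_{2r}\{\upsilon\}$. The doubling condition \eqref{doublingmeas}, iterated a number of times that depends only on $\lambda'$ (namely $\lceil\log_{2}(2/\lambda')\rceil$ doublings to pass from radius $\lambda' r$ up to radius $2r$, each intermediate radius staying below $r_{0}$ since $r<r_{0}/2$), yields a constant $C=C(c,\lambda')\geq 1$ with $\mu(N_{2r}\{\upsilon\})\leq C\,\mu(N_{\lambda' r}\{\upsilon\})$. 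Combining, $\mu(N_{\lambda' r}\{\upsilon\})\geq C^{-1}\mu(N_{2r}\{\upsilon\})\geq C^{-1}\mu(N_{r}\{x\})$, so the hole carries at least the fixed fraction $C^{-1}$ of the mass of the ball, giving
\[
\mu\big(N_{r}\{x\}\setminus\Psi\big)\geq C^{-1}\mu(N_{r}\{x\}),\qquad\text{equivalently}\qquad \frac{\mu(\Psi\cap N_{r}\{x\})}{\mu(N_{r}\{x\})}\leq 1-C^{-1}<1,
\]
uniformly in $x\in\Psi$ and $0<r<r_{0}/2$.

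To globalize I would invoke covering machinery for doubling measures. One clean route uses the Vitali covering theorem, valid because $\mu$ is doubling: the family $\{N_{r}\{x\}:x\in\Psi,\ 0<r<\delta\}$ is a Vitali cover of $\Psi$, so for fixed small $\delta$ there is a countable disjoint subfamily $\{N_{r_{i}}\{x_{i}\}\}$ with $\mu\big(\Psi\setminus\bigcup_{i}N_{r_{i}}\{x_{i}\}\big)=0$. Using the local estimate on each ball and disjointness,
\[
\mu(\Psi)\leq\sum_{i}\mu(\Psi\cap N_{r_{i}}\{x_{i}\})\leq(1-C^{-1})\sum_{i}\mu(N_{r_{i}}\{x_{i}\})\leq(1-C^{-1})\,\mu(N_{\delta}\Psi),
\]
the last step because the disjoint balls, having centres in $\Psi$ and radii below $\delta$, all lie in the neighbourhood $N_{\delta}\Psi$. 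Letting $\delta\to 0$ and using $N_{\delta}\Psi\downarrow\overline{\Psi}=\Psi$ together with continuity of the finite measure $\mu$ from above gives $\mu(\Psi)\leq(1-C^{-1})\mu(\Psi)$; since $1-C^{-1}<1$ and $\mu(\Psi)\leq 1<\infty$, this forces $\mu(\Psi)=0$. Equivalently, one could quote the Lebesgue density theorem for doubling measures: the local estimate shows no point of $\Psi$ is a density point, whereas $\mu$-almost every point of the measurable set $\Psi$ must be one, so again $\mu(\Psi)=0$.

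The main obstacle is precisely the passage from the pointwise porosity estimate to a global measure bound: this is where separability and the doubling hypothesis are indispensable, since they are exactly what make the Vitali covering theorem (equivalently, the Lebesgue differentiation theorem) available in this non-Euclidean setting. By contrast, the reductions to a single closed porous set and the bounded iteration of the doubling inequality are routine once that covering tool is in hand.
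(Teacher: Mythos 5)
The paper offers no proof of this theorem: it is quoted verbatim from \cite{PorosityMeas} (Propositions 3.5 and 3.3), so there is no internal argument to measure yours against. Taken on its own terms your proposal is correct, and it is in essence the standard argument that the cited source formalizes: reduce to a single closed porous set, show via finitely many applications of \eqref{doublingmeas} that each porosity hole $N_{{\lambda}'r}\{\upsilon\}$ carries a fixed fraction $C^{-1}$ of $\mu(N_{r}\{\psi\})$, and conclude either by a Vitali covering argument or by noting that no point of $\Psi$ can be a $\mu$-density point. Your preliminary reductions are all sound: the closure of a porous set is porous with a marginally smaller constant, the complement of $\operatorname{supp}\mu$ is null by the Lindel\"of property of a separable space, and taking $r<r_{0}/2$ does keep every intermediate radius in the iterated doubling inequality below $r_{0}$, so the constant $C=C(c,\lambda')$ is legitimate. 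The one honest caveat is that the decisive step is outsourced: the Vitali covering theorem (equivalently, the Lebesgue density theorem) for a measure that is doubling only at small scales is itself nontrivial, and is essentially what Proposition 3.3 of \cite{PorosityMeas} supplies, so your proof is a reduction to that machinery rather than a from-scratch derivation -- which is perfectly acceptable, but should be acknowledged. If you write it up, state the Vitali theorem for closed balls (replace $N_{r_{i}}\{x_{i}\}$ by its closure inside $N_{2r_{i}}\{x_{i}\}$ and absorb one more doubling constant) and keep the restriction to $\operatorname{supp}\mu$, since the density quotient is only meaningful where balls have positive measure.
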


We remark that the regularity assumption is superfluous since probabilistic
Borel measures on metric spaces are always regular (\cite{Billingsley} Theorem
1.1) and completion adds only subsets of null sets. Fulfilling doubling
condition everywhere implies that the measure is strictly positive (i.e.,
nonempty open sets are have positive measure); thus the support of the measure
is the whole space.

More on porosity can be found in \cite{Zajicek, PorosityMeas}. The book
\cite{Lucchetti} uses porosity to study generics in optimization problems (cf.
\cite{deBlasiMyjakPapini}). Results relating to porosity in fractal geometry
and analysis can be found for example in \cite{Chousionis,GraphDirectedMarkov}.

The following criterion will be useful.

\begin{proposition}
If $\Psi\subset M$ satisfies
\begin{equation}
\exists_{0<{\lambda}<1}\;\forall_{\psi\in\Psi}\;\forall_{n\geq1}%
\;\exists_{\upsilon\in M}\;\;N_{{\lambda}\cdot2^{-n}}\{\upsilon\}\subset
N_{2^{-n}}\{\psi\}\setminus\Psi, \label{binaryporous}%
\end{equation}
then $\Psi$ is porous.
\end{proposition}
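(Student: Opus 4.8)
The plan is to derive the full porosity condition \eqref{porous} from the dyadic hypothesis \eqref{binaryporous} by squeezing an arbitrary radius $r$ between two consecutive powers of two. Concretely, I would declare the porosity constants at the outset, taking $r_0 := 1$ and $\lambda' := \lambda/2$, and then verify the required inclusion for every $\psi \in \Psi$ and every $0 < r < 1$. Note that $\lambda' = \lambda/2 \in (0,1)$ since $0 < \lambda < 1$, so these are admissible choices.

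First I would fix $\psi \in \Psi$ and $r \in (0,1)$, and select the unique integer $n \geq 1$ with $2^{-n} \leq r < 2^{-(n-1)}$; such an $n$ exists and is unique because the half-open intervals $[2^{-n}, 2^{-(n-1)})$ for $n \geq 1$ partition $(0,1)$. This sandwich has two consequences I will use: the left inequality gives $2^{-n} \leq r$, and the strict right inequality $r < 2^{-(n-1)} = 2\cdot 2^{-n}$ gives $2^{-n} > r/2$. Next I would invoke the hypothesis \eqref{binaryporous} at this particular $n$ to produce a centre $\upsilon \in M$ with $N_{\lambda\cdot 2^{-n}}\{\upsilon\} \subset N_{2^{-n}}\{\psi\}\setminus\Psi$.

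It then remains only to enlarge the outer ball and shrink the inner one so as to match the shape of \eqref{porous}. On one hand, $2^{-n} \leq r$ yields $N_{2^{-n}}\{\psi\} \subset N_{r}\{\psi\}$, and hence $N_{2^{-n}}\{\psi\}\setminus\Psi \subset N_{r}\{\psi\}\setminus\Psi$. On the other hand, $2^{-n} > r/2$ yields $\lambda' r = (\lambda/2)r < \lambda\cdot 2^{-n}$, and hence $N_{\lambda' r}\{\upsilon\} \subset N_{\lambda\cdot 2^{-n}}\{\upsilon\}$. Chaining these with the inclusion from \eqref{binaryporous} gives
\[
N_{\lambda' r}\{\upsilon\} \subset N_{\lambda\cdot 2^{-n}}\{\upsilon\} \subset N_{2^{-n}}\{\psi\}\setminus\Psi \subset N_{r}\{\psi\}\setminus\Psi,
\]
which is precisely the defining condition \eqref{porous} for the constants $\lambda' = \lambda/2$ and $r_0 = 1$.

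I do not expect any serious obstacle: the argument is a routine dyadic interpolation. The only point that needs care is the bookkeeping of constants, since passing from the fixed scales $2^{-n}$ to a continuum of radii costs a factor of two, forcing $\lambda' = \lambda/2$ rather than $\lambda$. One should also confirm that the \emph{strict} inequality $2^{-n} > r/2$ — which holds exactly because the upper bound $r < 2^{-(n-1)}$ is strict — suffices for the inclusion of the inner balls, and that fixing $r_0 = 1$ guarantees a valid index $n \geq 1$ is always available.
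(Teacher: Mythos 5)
Your proposal is correct and follows essentially the same route as the paper: both fix $r_0=1$ and $\lambda'=\lambda/2$, sandwich $r$ between consecutive dyadic scales, and chain the inclusions $N_{\lambda' r}\{\upsilon\}\subset N_{\lambda\cdot 2^{-n}}\{\upsilon\}\subset N_{2^{-n}}\{\psi\}\setminus\Psi\subset N_{r}\{\psi\}\setminus\Psi$. The only difference is a cosmetic choice of which endpoint of the dyadic interval is strict, which does not affect the argument.
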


\begin{proof}
Choose $r_{0}:=1$ and associate with $0<r<r_{0}$ the number $n\geq1$ in such a
way that
\[
2^{-n}<r\leq2\cdot2^{-n}.
\]
(Namely $n:=\mbox{entier}[\log_{2}(r^{-1})]+1$).

From \eqref{binaryporous} there exist appropriate $0<\lambda<1$ and
$\upsilon\in M$. Scale ${\lambda}^{\prime}:=\frac{\lambda}{2}$ verifies
\eqref{porous}:
\[
N_{{\lambda}^{\prime}r}\{\upsilon\}\subset N_{{\lambda}2^{-n}}\{\upsilon
\}\subset N_{2^{-n}}\{\psi\}\setminus\Psi\subset N_{r}\{\psi\}\setminus\Psi.
\]

\end{proof}

Now we recall that the \textbf{Cantor space} $({\Sigma}^{\infty},\varrho)$ is
the set of infinite words over alphabet $\Sigma$ equipped with the
\textbf{Baire metric}
\[
\varrho\left(  ({\sigma}_{i})_{i=1}^{\infty},({\upsilon}_{i})_{i=1}^{\infty
}\right)  :=2^{-\,\min\{i:{\sigma}_{i}\neq{\upsilon}_{i}\}}%
\]
for $({\sigma}_{i})_{i=1}^{\infty},({\upsilon}_{i})_{i=1}^{\infty}\in{\Sigma
}^{\infty}$ (conveniently $2^{-\,\min\emptyset}:=0$). Note that this space
$({\Sigma}^{\infty},\varrho)$ may be referred to as \textbf{code space} in
fractal geometry settings.

The topology of the Cantor space is just the Tikhonov product of the discrete
alphabet $\Sigma$ and so it is compact. But the Baire metric obeys ultrametric
triangle inequality; this provides a tree structure in the space (compare also
K\"{o}nig's lemma on trees). The Cantor space appears among others in automata
theory (e.g., \cite{DisjunctiveSeq} and references therein) and symbolic
dynamics (\cite{FractalsEver,Superfractals}).

For future reference we note that balls in the Baire metric are cylinders
\begin{equation}
\forall_{n\geq1}\;\forall_{2^{-(n+1)}<r\leq2^{-n}}\;\forall_{\psi=(\psi
_{i})_{i=1}^{\infty}\in{\Sigma}^{\infty}}\;\;N_{r}\{\psi\}=\{\psi_{1}%
\}\times{\ldots}\times\{\psi_{n}\}\times{\Sigma}^{\infty}. \label{ultraballs}%
\end{equation}

For $\tau=({\tau}_{1},{\ldots},{\tau}_{m})\in{\Sigma}^{m}$ and $p\geq1$
denote
\[
\Psi(\tau,p):=\{({\sigma}_{i})_{i=1}^{\infty}\in{\Sigma}^{\infty}%
:\exists_{k\geq p}\;\forall_{l=1,{\ldots},m}\;{\tau}_{l}={\sigma}%
_{(k-1)+l}\},
\]
the set of words that do not contain the subword $\tau$ from the $p$-th
position onwards.

\begin{lemma}
\label{nontaustrings} The set $\Psi(\tau,p)$, as a subset of the code space
$({\Sigma}^{\infty},\varrho),$ is a Borel set and porous.
\end{lemma}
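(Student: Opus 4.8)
The plan is to treat the two assertions in turn. The Borel property is immediate, so I would dispose of it first, and then spend the effort on verifying the porosity criterion \eqref{binaryporous}.

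For the Borel claim I would show that $\Psi(\tau,p)$ is in fact closed. Its complement in $\Sigma^{\infty}$ consists of those sequences in which $\tau$ occurs starting at some position $k\geq p$, i.e.\ $\sigma_{(k-1)+l}=\tau_{l}$ for $l=1,\ldots,m$. For each fixed $k$ this condition constrains only finitely many coordinates, and since the topology on $\Sigma^{\infty}$ is the product of the discrete topology on $\Sigma$, it defines a clopen cylinder. The complement of $\Psi(\tau,p)$ is the union of these cylinders over $k\geq p$, hence open; equivalently $\Psi(\tau,p)=\bigcap_{k\geq p}\{\sigma:\tau\text{ does not start at }k\}$ is a countable intersection of clopen sets. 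Either way $\Psi(\tau,p)$ is closed, and in particular Borel.

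For porosity I would, by the preceding Proposition, verify \eqref{binaryporous}. So fix $\psi\in\Psi(\tau,p)$ and $n\geq1$. By \eqref{ultraballs} the ball $N_{2^{-n}}\{\psi\}$ is exactly the cylinder of sequences agreeing with $\psi$ on the first $n$ coordinates. The strategy is to pass to a smaller cylinder inside it that \emph{forces} an occurrence of $\tau$ at an admissible position, thereby landing wholly in the complement of $\Psi(\tau,p)$. Concretely, set $s:=\max\{n+1,p\}$ and let $\upsilon$ be any sequence agreeing with $\psi$ on the first $n$ coordinates and satisfying $\upsilon_{s+l-1}=\tau_{l}$ for $l=1,\ldots,m$. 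With $N:=s+m-1$, every sequence in the length-$N$ cylinder at $\upsilon$ contains $\tau$ beginning at position $s\geq p$, hence avoids $\Psi(\tau,p)$; and since $N\geq n$ and $\upsilon$ matches $\psi$ through coordinate $n$, this cylinder lies in $N_{2^{-n}}\{\psi\}$. By \eqref{ultraballs} the cylinder equals $N_{2^{-N}}\{\upsilon\}$, so $N_{\lambda 2^{-n}}\{\upsilon\}\subset N_{2^{-n}}\{\psi\}\setminus\Psi(\tau,p)$ whenever $\lambda 2^{-n}\leq 2^{-N}$, that is $\lambda\leq 2^{-(N-n)}$.

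The one delicate point, and the step I expect to be the main obstacle, is the \emph{uniformity} of $\lambda$ over all $n\geq1$. For $n\geq p-1$ one has $s=n+1$ and $N-n=m$, which is harmless; but for small $n$ the copy of $\tau$ cannot be placed immediately after coordinate $n$ (it would start before position $p$) and must be pushed out to position $p$, giving $N-n=p+m-1-n$. Because $p$ and $m$ are fixed and $n\geq1$, this exponent stays bounded: $N-n\leq p+m-2$ in that regime, so $N-n\leq p+m$ in all cases. Hence the single choice $\lambda:=2^{-(p+m)}\in(0,1)$, depending only on $\tau$ and $p$ but not on $\psi$ or $n$, satisfies $\lambda\leq 2^{-(N-n)}$ throughout, so \eqref{binaryporous} holds and $\Psi(\tau,p)$ is porous.
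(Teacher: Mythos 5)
Your proof is correct and takes essentially the same route as the paper's: the complement of $\Psi(\tau,p)$ is a countable union of cylinders (so $\Psi(\tau,p)$ is closed, hence Borel), and porosity is obtained by planting a copy of $\tau$ just beyond position $\max\{n,p\}$ and passing to the sub-cylinder that forces this occurrence, whose depth exceeds $n$ by an amount depending only on $m$ and $p$. The paper instead appends $\tau$ periodically from position $n+p$ onward and arrives at $\lambda=2^{-(2m+p)}$ rather than your $2^{-(p+m)}$, but this is a cosmetic difference.
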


\begin{proof}
To simplify notation $\Psi:=\Psi(\tau,p)$ and $\tilde{n}:=n+p$ given $n\geq1$.

Let $\psi=({\psi}_{i})_{i=1}^{\infty}\in\Psi$. We investigate $N_{2^{-n}%
}\{\psi\}\setminus\Psi$.

Define for $i\geq1$
\[
{\upsilon}_{i}:=\left\{
\begin{array}
[c]{ll}%
{\psi}_{i}, & i<\tilde{n},\\
{\tau}_{(i-\tilde{n})\mod m+1}, & i\geq\tilde{n}.
\end{array}
\right.
\]
Of course $\upsilon:=({\upsilon}_{i})_{i=1}^{\infty}\in{\Sigma}^{\infty
}\setminus\Psi$. Moreover $\upsilon\in N_{2^{-n}}\{\psi\}$, because
\[
\varrho(\upsilon,\psi)<2^{-\tilde{n}}<2^{-n}.
\]

Consider $\varsigma=({\sigma}_{i})_{i=1}^{\infty}\in{\Sigma}^{\infty}$ close
enough to $\upsilon$, namely
\[
\varrho(\varsigma,\upsilon)<2^{-(2m+p)}\cdot2^{-n}.
\]
Then ${\sigma}_{i}={\upsilon}_{i}$ for $i\leq(2m+p)+n$. So
\[
p<\tilde{n}+m<\tilde{n}+m+1<{\ldots}<\tilde{n}+m+(m-1)<2m+p+n
\]
and thus ${\sigma}_{\tilde{n}+m+l-1}={\tau}_{l}$ for $l=1,2,{\ldots},m$, which
in turn means that $\varsigma\not \in \Psi$. Additionally
\[
\varrho(\varsigma,\psi)\leq\varrho(\varsigma,\upsilon)+\varrho(\upsilon
,\psi)<2^{-1}\cdot2^{-n}+2^{-1}\cdot2^{-n}=2^{-n},
\]
which means $\varsigma\in N_{2^{-n}}\{\psi\}$. Altogether
\[
N_{{\lambda}\cdot2^{-n}}\{\upsilon\}\subset N_{2^{-n}}\{\psi\}\setminus\Psi,
\]
if we put $\lambda:=2^{-(2m+p)}$. Therefore $\Psi$ is porous subject to
condition \eqref{binaryporous}.

The complement
\begin{align*}
{\Sigma}^{\infty}\setminus\Psi &  =\bigcup_{k\geq1}{\Sigma}^{p+(k-1)}%
\times\{{\tau}_{1}\}\times{\ldots}\times\{{\tau}_{m}\}\times{\Sigma}^{\infty
}=\\
&  =\bigcup_{k\geq1}\bigcup_{{\pi}\in{\Sigma}^{p+k-1}}\,N_{2^{-(p+k-1+m)}%
}\{{\pi}\cdot{\tau}\}
\end{align*}
is a countable union of open balls due to \eqref{ultraballs}, hence $\Psi$ is Borel.
\end{proof}

\begin{theorem}
\label{GenericDisjunctive} Sequences which are not disjunctive form a Borel
$\sigma$-porous set $D^{\prime}\subset{\Sigma}^{\infty}$ w.r.t. the Baire metric.
\end{theorem}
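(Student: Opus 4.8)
The plan is to express the set $D'$ of non-disjunctive sequences as a countable union of the sets $\Psi(\tau,p)$ and then invoke Lemma \ref{nontaustrings}. First I would unwind disjunctiveness through the characterization \eqref{complexword}: a sequence $\sigma$ is disjunctive precisely when, for every length $m$, every word $\tau\in\Sigma^{m}$, and every threshold $n$, the word $\tau$ occurs in $\sigma$ starting at some position beyond $n$. Negating this, $\sigma$ fails to be disjunctive exactly when there exist $m$, a word $\tau\in\Sigma^{m}$, and a threshold $p$ such that $\tau$ does not occur anywhere from the $p$-th position onwards, i.e.\ $\sigma\in\Psi(\tau,p)$. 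This yields the identity
\[
D'=\bigcup_{m\geq1}\;\bigcup_{\tau\in\Sigma^{m}}\;\bigcup_{p\geq1}\Psi(\tau,p).
\]

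Next I would observe that this union is countable. The alphabet $\Sigma$ is finite, so each $\Sigma^{m}$ is finite, and the index set $\{(m,\tau,p)\}$ is a countable union of finite sets. By Lemma \ref{nontaustrings} every $\Psi(\tau,p)$ is both Borel and porous. A countable union of Borel sets is Borel, and a countable union of porous sets is $\sigma$-porous by the very definition of $\sigma$-porosity given at the start of Section \ref{categorysec}; hence $D'$ is a Borel $\sigma$-porous subset of the code space $(\Sigma^{\infty},\varrho)$, which is the assertion.

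The only real content lies in the set-theoretic identity of the first step; the remainder is bookkeeping. The point requiring care is the quantifier over the threshold $p$, equivalently the position of the last occurrence of $\tau$: I must check that a sequence in which some $\tau$ occurs only finitely often is captured by at least one $\Psi(\tau,p)$, and conversely that membership in any single $\Psi(\tau,p)$ already prevents $\tau$ from recurring, hence destroys disjunctiveness. Once the equivalence ``$\sigma$ not disjunctive $\iff$ some $\tau$ occurs only finitely often in $\sigma$ $\iff$ $\sigma\in\Psi(\tau,p)$ for some $\tau,p$'' is matched against \eqref{complexword}, the theorem follows at once from Lemma \ref{nontaustrings} and the closure of the Borel and $\sigma$-porous classes under countable unions.
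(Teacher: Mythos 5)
Your argument is correct and is essentially identical to the paper's own proof: negate the characterization \eqref{complexword} to write $D'=\bigcup_{p\geq1}\bigcup_{m\geq1}\bigcup_{\tau\in\Sigma^{m}}\Psi(\tau,p)$, note the union is countable since $\Sigma$ is finite, and invoke Lemma \ref{nontaustrings} together with the closure of the Borel and $\sigma$-porous classes under countable unions. You also correctly read $\Psi(\tau,p)$ according to its verbal description (``$\tau$ does not occur from the $p$-th position onwards''), which is the meaning consistent with the proof of Lemma \ref{nontaustrings}, even though the displayed formula defining $\Psi(\tau,p)$ in the paper is missing a negation.
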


\begin{proof}
We have
\begin{align*}
D^{\prime}  &  =\left\{  ({\sigma}_{i})_{i=1}^{\infty}\in{\Sigma}^{\infty
}:({\sigma}_{i})_{i=1}^{\infty}%
\;\mbox{does not obey condition \eqref{complexword}}\right\}  =\\
&  =\bigcup_{p\geq1}\bigcup_{m\geq1}\bigcup_{{\tau}\in{\Sigma}^{m}}\Psi({\tau
},p).
\end{align*}
Since our union is countable, it is enough to remind that the sets $\Psi
({\tau},p)$ are porous according to Lemma~\ref{nontaustrings}.
\end{proof}

We are ready to prove the main theorem of this section.

\begin{theorem}
\label{porousthm}The set of sequences $({\sigma}_{n})_{n=1}^{\infty}\in
{\Sigma}^{\infty}$, which fail to generate a random orbit that yields the
strongly-fibred attractor of the IFS $F$ via \eqref{tailconverg} and
\eqref{fillattractor} is $\sigma$-porous in $({\Sigma}^{\infty},\varrho)$.
\end{theorem}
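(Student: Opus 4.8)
The plan is to observe that the heavy lifting has already been done, so that the theorem reduces to a one-line set-containment argument combined with the hereditary property of $\sigma$-porosity. First I would fix $x_0 \in \mathcal{B}(A)$ and let $E \subset \Sigma^{\infty}$ denote the exceptional set of sequences that fail to produce the attractor via \eqref{tailconverg} and \eqref{fillattractor}. The key point is that Corollary \ref{Corollary1} asserts exactly that every \emph{disjunctive} sequence $\sigma$ succeeds, i.e.\ yields $C_{\infty}(x_0,\sigma) = A$ and hence both \eqref{tailconverg} and \eqref{fillattractor}. Reading this contrapositively, any $\sigma \in E$ must fail to be disjunctive, so $E \subset D'$, where $D'$ is the set of non-disjunctive sequences introduced in Theorem \ref{GenericDisjunctive}.

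With the containment $E \subset D'$ in hand, the conclusion is immediate. Theorem \ref{GenericDisjunctive} establishes that $D'$ is Borel and $\sigma$-porous in $(\Sigma^{\infty}, \varrho)$. Because a subset of a $\sigma$-porous set is again $\sigma$-porous (as noted just after the definition \eqref{porous}), it follows that $E$ is $\sigma$-porous, which is precisely the assertion of the theorem.

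The whole argument therefore amounts to packaging two earlier results: Corollary \ref{Corollary1}, which is the topological/combinatorial success criterion (disjunctiveness forces convergence to a strongly-fibred attractor), and Theorem \ref{GenericDisjunctive}, which quantifies the smallness of the set of sequences failing to be disjunctive. There is no genuine analytic obstacle to overcome here; the smallness estimate has already been absorbed into Lemma \ref{nontaustrings}. If anything requires care, it is only the remark that Corollary \ref{Corollary1} applies to the fixed $x_0$ uniformly over all disjunctive $\sigma$, so that the implication ``disjunctive $\Rightarrow$ success'' is valid for every such $\sigma$ and the containment $E \subset D'$ holds as stated.
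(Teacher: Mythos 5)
Your proposal is correct and follows essentially the same route as the paper, whose proof is the one-liner that the set of faulty sequences is contained in the non-disjunctive set $D'$ of Theorem \ref{GenericDisjunctive}; you have simply spelled out the role of Corollary \ref{Corollary1} and the hereditary property of $\sigma$-porosity, both of which the paper leaves implicit.
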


\begin{proof}
The set of faulty sequences is a subset of $D^{\prime}$ in
Theorem~\ref{GenericDisjunctive}.
\end{proof}

\section{\label{probsec}Probabilistic analysis}

Let $Z_{n}:(S,\mathfrak{S},{\Pr})\rightarrow\Sigma$, $n=1,2,{\ldots}$, be a
sequence of random variables on a probability space $(S,\mathfrak{S},{\Pr})$,
where $\mathfrak{S}$ is a $\sigma$-algebra of events in $S$, and ${\Pr
}:{\mathfrak{S}}\rightarrow\lbrack0,1]$ probability measure. This stochastic
process generates "truely" random sequences $({\sigma}_{1},{\sigma}%
_{2},{\ldots})\in{\Sigma}^{\infty}$ i.e. ${\sigma}_{n}=Z_{n}(s)$ if the event
$s\in S$ happens at the $n$-th stage.

We define the stochastic process $(Z_{n})_{n\geq1}$ to be \textbf{disjunctive}
when
\[
\forall_{m\geq1}\;\forall_{{\tau}\in{\Sigma}^{m}}\;\;\Pr\left(  Z_{(n-1)+l}%
={\tau}_{l},\;l=1,{\ldots},m,\;\mbox{for some}\;n\right)  =1;
\]
that is, each finite word appears in the outcome with probability $1$.

In fact all words almost surely appear infinitely often. But an even stronger
assertion is true.

\begin{proposition}
\label{aslibrary} A disjunctive stochastic process $(Z_{n})_{n\geq1}$ with
values in $\Sigma$ generates a disjunctive sequence $({\sigma}_{n}%
)_{n=1}^{\infty}\in{\Sigma}^{\infty}$ as its outcome with probability $1$.
\end{proposition}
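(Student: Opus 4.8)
The plan is to recognize the event ``the outcome is disjunctive'' as a countable intersection of almost-sure events, and then finish by countable subadditivity. First I would fix notation: for a finite word ${\tau}=({\tau}_{1},{\ldots},{\tau}_{m})\in{\Sigma}^{m}$ let $A_{\tau}\subset S$ denote the event that $\tau$ occurs somewhere in the outcome,
\[
A_{\tau}:=\{s\in S:\exists_{n\geq1}\;\forall_{l=1,{\ldots},m}\;\;Z_{(n-1)+l}(s)={\tau}_{l}\}.
\]
The hypothesis that $(Z_{n})_{n\geq1}$ is a disjunctive stochastic process says exactly that $\Pr(A_{\tau})=1$ for every finite word $\tau$; in particular each $A_{\tau}$ already belongs to $\mathfrak{S}$, so no measurability question arises.

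Second, I would translate the definition of disjunctiveness for a single sequence into these events. By that definition the outcome $({\sigma}_{n})_{n=1}^{\infty}=(Z_{n}(s))_{n=1}^{\infty}$ is disjunctive precisely when \emph{every} finite word occurs in it, that is, when
\[
s\in\bigcap_{m\geq1}\bigcap_{{\tau}\in{\Sigma}^{m}}A_{\tau}.
\]
The decisive structural point is that $\Sigma$ is a finite alphabet, so each ${\Sigma}^{m}$ is finite and the collection of all finite words $\bigcup_{m\geq1}{\Sigma}^{m}$ is countable. Hence the event ``the outcome is disjunctive'' is a \emph{countable} intersection of the full-measure events $A_{\tau}$.

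Finally I would pass to complements. The set of outcomes failing to be disjunctive is $\bigcup_{\tau}(S\setminus A_{\tau})$, a countable union of null events since $\Pr(S\setminus A_{\tau})=0$ for each $\tau$. Countable subadditivity of $\Pr$ gives $\Pr\big(\bigcup_{\tau}(S\setminus A_{\tau})\big)=0$, whence the complementary event, that the outcome is disjunctive, has probability $1$, as required.

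The argument is essentially routine, so I do not expect a genuine obstacle; the one point worth underlining is the role of the finiteness of $\Sigma$, which guarantees countability of the index set and thereby licenses \emph{countable} rather than merely finite subadditivity. Without that countability one could not conclude that the intersection of the almost-sure events $A_{\tau}$ remains almost sure. It is also worth remarking that this is precisely the probabilistic shadow of the topological decomposition in Theorem~\ref{GenericDisjunctive}: there the non-disjunctive strings were exhibited as a countable union of porous sets $\Psi({\tau},p)$, while here they form a countable union of null sets, reflecting the paper's theme that the stochastic statement is a limiting consequence of the underlying combinatorics.
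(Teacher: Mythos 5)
Your proof is correct. It differs from the paper's argument only in how it concludes that a countable intersection of almost-sure events is almost sure. You pass to complements and invoke countable subadditivity on the countable family of null sets $S\setminus A_{\tau}$, indexed by the countably many finite words over the finite alphabet $\Sigma$ --- the most direct route. The paper instead organizes the events into a decreasing chain: it introduces the finite Champernowne words $\gamma(p)$ (the concatenation of all words of length at most $p$), observes that the events $E(\gamma(p))$ are nested and each has probability $1$ by disjunctiveness of the process, deduces $\Pr\bigl(\bigcap_{p}E(\gamma(p))\bigr)=1$ by continuity from above, and finally notes that $E(\gamma(p))\subset\bigcap_{m\leq p}\bigcap_{u\in\Sigma^{m}}E(u)$, since occurrence of the single word $\gamma(p)$ forces occurrence of every word of length at most $p$ as a subword. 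Both arguments rest on the same two facts --- the index set of finite words is countable and each individual event has full measure --- so neither is more general; yours is shorter and avoids the auxiliary Champernowne construction, while the paper's version makes visible the concatenation trick that also underlies its combinatorial results. Your closing observation tying the decomposition to the countable union of porous sets $\Psi(\tau,p)$ in Theorem~\ref{GenericDisjunctive} is exactly the parallel the paper draws in Example~\ref{BarnVince}.
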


\begin{proof}
Denote for $u\in{\Sigma}^{m}$
\[
E(u):=\{(Z_{(n-1)+1},{\ldots},Z_{(n-1)+m})=u\;\mbox{for some}\;n\}.
\]
Define inductively $\gamma(p)$ to be the finite Champernowne word
(Example~\ref{champernowne}) consisting of all finite words over $\Sigma$ with
length at most $p\geq1$, and such that $\gamma(p+1)$ is just $\gamma(p)$ with
attached at its end all finite words of length $p+1$. Thus the sequence of
events $E(\gamma(p))$, $p=1,2,{\ldots}$, is descending. Moreover by
disjunctiveness of the process $\Pr(E(\gamma(p)))=1$, so
\[
\Pr\left(  \bigcap_{p\geq1}E(\gamma(p))\right)  =1.
\]
The event
\[
\bigcap_{m\geq1}\,\bigcap_{u\in{\Sigma}^{m}}\,E(u)
\]
describes the appearance of a disjunctive sequence as an outcome. Its
probability equals $1$, because
\[
\bigcap_{m\leq p}\,\bigcap_{u\in{\Sigma}^{m}}\,E(u)\supset E(\gamma(p)).
\]

\end{proof}

\begin{example}
\label{BernoulliScheme}(Bernoulli scheme; \cite{FractalsEver}). Suppose
$(Z_{n})_{n\geq1}$ is the sequence of independent random variables is
distributed according to
\begin{equation}
\exists_{\alpha>0}\;\forall_{n\geq1}\;\forall_{\sigma\in\Sigma}\;\;\Pr
(Z_{n}=\sigma)\geq\alpha. \label{positoss}%
\end{equation}
An example is the classical Bernoulli scheme with outcomes in $\Sigma$. Then
$(Z_{n})_{n\geq1}$ is disjunctive process. This follows from the
Borel-Cantelli lemma (e.g. the classic Example on p.37 after Theorem 2.2.3 in
\cite{Knill}).

For Bernoulli scheme one could alternatively apply Theorem 2.3 (item 6) from
\cite{DisjunctiveSeq} which says that the set of nondisjunctive sequences is
null with respect to the Bernoulli product measure. This follows as corollary
from combination of Theorems~\ref{GenericDisjunctive} and \ref{PorousIsNull}.
See Example~\ref{BarnVince} below for a more general case.
\end{example}

Although ergodic stochastic processes are useful in engineering applications
(e.g. \cite{GrayErg, GrayInf, Shields}) they might be too weak for reliable
simulations in probabilistic algorithms like the chaos game. (In particular,
pseudorandom number generators that pass a battery of statistical tests may
fail to generate an attractor).

\begin{example}
\label{ergodicisnotenough}(Ergodicity is not enough; \cite{Lothaire} Example
1.8.1). Let $(Z_{n})_{n\geq1}$ be the homogeneous Markov chain with states in
$\Sigma:=\{1,2\}$ such that
\begin{align*}
\forall_{{\sigma}\in{\Sigma}}\;\Pr(Z_{1}  &  =\sigma)=\frac{1}{2},\\
\forall_{n\geq2}\;\Pr(Z_{n}  &  =1\,|\,Z_{n-1}=2)=1,\\
\forall_{n\geq2}\;\forall_{{\sigma}\in{\Sigma}}\;\Pr(Z_{n}  &  =\sigma
\,|\,Z_{n-1}=1)=\frac{1}{2}.
\end{align*}
(Note that we put also condition on initial distribution of the chain). It is
ergodic (even strongly mixing as the square of its transition matrix has
positive entries; e.g.\cite{Shields} Prop.I.2.10). Moreover our chain occupies
all states almost surely:
\[
\forall_{\sigma\in\Sigma}\;\;\Pr(Z_{n}=\sigma
\;\mbox{for infinitely many}\;n)=1.
\]
Nevertheless the word "$22$" is forbidden:
\[
\Pr(Z_{n}=2,\;Z_{n+1}=2\;\mbox{for some}\;n)=0,
\]
i.e. the process lacks disjunctiveness (comp. with discussion in
\cite{Shields} chap.I.4).

In relation with Example~\ref{BarnVince} it is not hard to see that a
homogeneous finite Markov chain (with strictly positive initial distribution)
is disjunctive if and only if its transition matrix has positive entries.
\end{example}

\begin{example}
\label{BarnVince}(Chain with complete connections; \cite{ChaosGame}). Let
$(Z_{n})_{n\geq1}$ be a sequence of random variables with conditional marginal
distributions
\begin{equation}
\exists_{\alpha>0}\;\forall_{n\geq1}\;\forall_{{\sigma}_{1},{\ldots},{\sigma
}_{n}\in\Sigma}\label{conditpositoss}\\
\Pr(Z_{n}={\sigma}_{n}\,|\,Z_{n-1}={\sigma}_{n-1},{\ldots},Z_{1}={\sigma}%
_{1})\geq{\alpha},
\end{equation}
and initial distribution
\[
\Pr(Z_{1}={\sigma}_{1})\geq{\alpha}.
\]
Sometimes it is called \textbf{chain with complete connections} and
significantly generalizes usual Markov chain (\cite{Iosifescu}). We shall
indirectly prove that such minorized chains are disjunctive processes.

Define on $({\Sigma}^{\infty},\varrho)$ probabilistic measure $\mu$ to be the
completion of the joint distribution of the process $(Z_{n})_{n\geq1}$:
\[
\mu(\Xi):=\Pr((Z_{1},Z_{2},{\ldots})\in\Xi)
\]
for Borel subsets $\Xi\subset{\Sigma}^{\infty}$ (comp. \cite{Shields} Theorem
I.1.2 or \cite{GrayErg} Section 2.7 and Chapter 3). By description of balls
given in \eqref{ultraballs}
\[
\mu(N_{r}\{\varsigma\})=\Pr(Z_{1}={\sigma}_{1},{\ldots},Z_{n}={\sigma}_{n})
\]
for radii $r\in\left(  2^{-(n+1)},2^{-n}\right]  $, $n\geq1$, and centers at
$\varsigma=({\sigma}_{1},{\ldots},{\sigma}_{n},{\ldots})\in{\Sigma}^{\infty}$.

The measure $\mu$ obeys doubling condition. Indeed assume now $2r\in\left(
2^{-(n+1)},2^{-n}\right]  $ and calculate
\begin{align*}
\mu(N_{2r}\{\varsigma\})  &  =\Pr(Z_{1}={\sigma}_{1},{\ldots},Z_{n}={\sigma
}_{n})=\\
&  =\frac{\Pr(Z_{1}={\sigma}_{1},{\ldots},Z_{n+1}={\sigma}_{n+1})}{\Pr
(Z_{n+1}={\sigma}_{n+1}\,|\,Z_{n}={\sigma}_{n},{\ldots},Z_{1}={\sigma}_{1}%
)}\leq\frac{1}{\alpha}\cdot\mu(N_{r}\{\varsigma\}),
\end{align*}
where the inequality comes from \eqref{conditpositoss}.

Therefore we can apply Theorem~\ref{PorousIsNull} and
Theorem~\ref{GenericDisjunctive} to find out that nondisjunctive sequences
form Borel $\mu$-null set, so the chain generates disjunctive sequence almost surely.
\end{example}

We finalize this section by giving its main result, which follows directly
from Theorem~\ref{Theorem1} via Proposition~\ref{aslibrary}.

\begin{theorem}
\label{stochaosgame} Let $A$ be a strongly-fibred attractor of the IFS
$F=(X,f_{\sigma}:\sigma\in\Sigma)$. If the stochastic process $Z_{n}%
:(S,{\mathfrak{S}},\Pr)\rightarrow\Sigma$, $n=1,2,{\ldots}$, generating
$({\sigma}_{n})_{n=1}^{\infty}\in{\Sigma}^{\infty}$ is disjunctive, then
\eqref{tailconverg} and \eqref{fillattractor} in the statement of Corollary
\ref{Corollary1} hold with probability $1$.
\end{theorem}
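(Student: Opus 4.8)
The plan is to obtain this theorem as an immediate synthesis of the deterministic result Corollary~\ref{Corollary1} and the probabilistic Proposition~\ref{aslibrary}; essentially all of the substantive work has already been done, and what remains is to combine an almost-sure statement about the driving word with a deterministic pointwise implication.

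First I would pass from the stochastic process to its sample sequences. Fix a starting point $x_{0}\in\mathcal{B}(A)$ (matching the standing hypothesis of Corollary~\ref{Corollary1}). The process $(Z_{n})_{n\geq1}$ produces, for each sample point $s\in S$, an infinite word $\sigma(s)=(Z_{n}(s))_{n=1}^{\infty}\in{\Sigma}^{\infty}$, and hence a chaos game orbit of $x_{0}$. By Proposition~\ref{aslibrary}, disjunctiveness of the process guarantees that the event
\[
D:=\{\,s\in S:\sigma(s)\text{ is a disjunctive sequence}\,\}
\]
satisfies $\Pr(D)=1$.

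Next I would invoke the deterministic theory pointwise on $D$. For each fixed $s\in D$ the word $\sigma(s)$ is disjunctive, so Corollary~\ref{Corollary1} applies verbatim: its hypotheses are exactly that $A$ is strongly-fibred, $x_{0}\in\mathcal{B}(A)$, and $\sigma$ disjunctive. It yields $C_{\infty}(x_{0},\sigma(s))=A$, which is precisely the conjunction of \eqref{tailconverg} and \eqref{fillattractor} for the orbit driven by $\sigma(s)$. Consequently the set of sample points for which both \eqref{tailconverg} and \eqref{fillattractor} hold contains $D$, and therefore has probability~$1$.

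There is no serious obstacle here; the only points requiring care are bookkeeping ones. One should note that $D$ is genuinely an event in ${\mathfrak{S}}$: this is inherited from Proposition~\ref{aslibrary}, where the disjunctiveness event is realized as the countable intersection $\bigcap_{m\geq1}\bigcap_{u\in{\Sigma}^{m}}E(u)$ of measurable events, so no new measurability argument is needed. One should also keep explicit that the conclusions \eqref{tailconverg} and \eqref{fillattractor} are statements about the orbit of a fixed $x_{0}\in\mathcal{B}(A)$, so the quantification over the starting point is handled simply by fixing $x_{0}$ at the outset. With these remarks the theorem follows, confirming the paper's thesis that almost-sure stochastic convergence of the chaos game is a limiting consequence of the combinatorial and topological results established earlier.
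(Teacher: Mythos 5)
Your argument is correct and is essentially the paper's own: the authors state that the theorem "follows directly from Theorem~\ref{Theorem1} via Proposition~\ref{aslibrary}," which is exactly your combination of the almost-sure disjunctiveness of the sample word with the deterministic Corollary~\ref{Corollary1}. Your added remarks on measurability of the disjunctiveness event and on fixing $x_{0}$ are sensible bookkeeping that the paper leaves implicit.
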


\section{\label{rapunzelsec}The Rapunzel Theorem}

Let $F=(X,f_{\sigma}:\sigma\in\Sigma)$ be an IFS of homeomorphisms acting on a
compact metric space $X$. Let
\[
F^{\ast}:=(X,f_{\sigma}^{-1}:\sigma\in\Sigma)
\]
be the corresponding \textbf{dual }IFS$.$ Let $A$ be an attractor of $F$, and
let $\mathcal{B}(A)$ denote the basin of $A$. Then the set $A^{\ast
}:=X\backslash\mathcal{B}(A)$ is called the \textbf{dual repeller} and
$(A,A^{\ast})$ is called an \textbf{attractor/repeller} pair. We suppose here
that $A^{\ast}$ is an attractor of $F^{\ast}.$ It is readily proved that the
basin $\mathcal{B}(A^{\ast})$ of $A^{\ast}$ (with respect to $F^{\ast}$) is
$\mathcal{B}(A^{\ast})=X\backslash A$. Note that $\mathcal{B}(A)=X\backslash
A^{\ast}$. We furthermore suppose that $A$ is \textbf{point-fibred }with
respect to $F$ and $A^{\ast}$ is point-fibred with respect to $F^{\ast}$, see
\cite[Chapter 4]{kieninger}. This means that there exist continuous maps%
\[
\pi_{F}:{\Sigma}^{\infty}\rightarrow A\text{ and }\pi_{F^{\ast}}:{\Sigma
}^{\infty}\rightarrow A^{\ast}%
\]
that are well-defined for $\varsigma=\sigma_{1}...\sigma_{k}...\in{\Sigma
}^{\infty}$ by%
\[
\pi_{F}(\varsigma)=\lim_{k\rightarrow\infty}f_{\sigma_{1}}\circ...f_{\sigma
_{k}}(x)\text{, }x\in B,
\]%
\[
\pi_{F^{\ast}}(\varsigma)=\lim_{k\rightarrow\infty}f_{\sigma_{1}}^{-1}%
\circ...f_{\sigma_{k}}^{-1}(y)\text{, }y\in B^{\ast},
\]
where the limits are independent of $x$ and $y$. Moreover we have for all
$\varsigma\in{\Sigma}^{\infty}$
\begin{equation}
\pi_{F}(S(\varsigma))=f_{\sigma_{1}}^{-1}(\pi_{F}(\varsigma))\text{ and }%
\pi_{F^{\ast}}(S(\varsigma))=f_{\sigma_{1}}(\pi_{F^{\ast}}(\varsigma
))\label{commuteq}%
\end{equation}
where $S:{\Sigma}^{\infty}\rightarrow{\Sigma}^{\infty}$ is the shift map,
namely the continuous mapping defined by
\[
S(\varsigma)=\sigma_{2}\sigma_{3}...\text{ for all }\varsigma=\sigma_{1}%
\sigma_{2}\sigma_{3}...\in{\Sigma}^{\infty}\text{.}%
\]

In general, an IFS of homeomorphisms can have many attractor/repeller pairs.
Here we are considering only the situation where $F$ has exactly one
attractor. Our terminology and ideas derive from \cite{mcgehee1} and
\cite{mcgehee2}. However, there is a crucial difference in nomenclature,
because what McGehee calls an "attractor" we would call a "Conley attractor".

\begin{theorem}
\label{rapunzelthm} Let $F$ be an IFS of homeomorphisms with a unique
point-fibred attractor $A$ and point-fibred dual repeller $A^{\ast}$. Let
$\varsigma$ be a disjunctive sequence. Then there is a set of points
$X^{\prime}\subset X$ such that (i) $X\backslash X^{\prime}$ is $\sigma
$-porous; (ii) the chaos game orbit generated by $F,x,\varsigma$ yields $A$
for all $x\in X^{\prime}$; (iii) the dual chaos game orbit generated by
$F^{\ast},x,\varsigma$ yields $A^{\ast}$ for all $x\in X^{\prime}$.
\end{theorem}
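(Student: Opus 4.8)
The plan is to prove that $X\setminus X'$ reduces to just the two $\varsigma$-fibres $\pi_F(\varsigma)$ and $\pi_{F^{\ast}}(\varsigma)$, and then to observe that a two-point set is $\sigma$-porous. The driving tool for (ii) is Corollary~\ref{Corollary1}: since $A$ is point-fibred it is strongly-fibred, so once the forward orbit $x_k=f_{\sigma_k}\circ\cdots\circ f_{\sigma_1}(x)$ enters the basin $\mathcal{B}(A)$, the tail from that index on is the orbit of a basin point driven by a shift of $\varsigma$ --- still disjunctive --- and hence fills out $A$. Because $C_{\infty}$ depends only on the tail of the orbit, this already gives $C_{\infty}(x,\varsigma)=A$. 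Part (iii) is the mirror statement for the dual attractor $A^{\ast}$ of $F^{\ast}$.

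First I would extract two invariances. From $A^{\ast}=\bigcup_{\sigma}f_{\sigma}^{-1}(A^{\ast})$ we get $f_{\sigma}^{-1}(A^{\ast})\subset A^{\ast}$, equivalently $f_{\sigma}(\mathcal{B}(A))\subset\mathcal{B}(A)$: the basin of $A$ is forward invariant, so escape from $A^{\ast}$ is permanent. Dually $A=\bigcup_{\sigma}f_{\sigma}(A)$ gives $f_{\sigma}(A)\subset A$, so $\mathcal{B}(A^{\ast})=X\setminus A$ is forward invariant under every $f_{\sigma}^{-1}$. Combined with the previous paragraph, the orbit of $x$ fails to yield $A$ only if $x_k\in A^{\ast}$ for \emph{all} $k$, and the dual orbit fails to yield $A^{\ast}$ only if $f_{\sigma_k}^{-1}\circ\cdots\circ f_{\sigma_1}^{-1}(x)\in A$ for all $k$.

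The crux is to compute these two trapped sets. The forward-trapped set is
\[
\bigcap_{k\ge 1}\bigl(f_{\sigma_k}\circ\cdots\circ f_{\sigma_1}\bigr)^{-1}(A^{\ast})=\bigcap_{k\ge 1}f_{\sigma_1}^{-1}\circ\cdots\circ f_{\sigma_k}^{-1}(A^{\ast}),
\]
a nested intersection (again by $f_{\sigma}^{-1}(A^{\ast})\subset A^{\ast}$) which is precisely the $F^{\ast}$-fibre of $A^{\ast}$ over $\varsigma$; since $A^{\ast}$ is point-fibred for $F^{\ast}$, it collapses to the single point $\pi_{F^{\ast}}(\varsigma)$. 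The symmetric calculation --- here using that each $f_{\sigma}$ is a homeomorphism --- identifies the dual-trapped set $\bigcap_{k\ge 1}f_{\sigma_1}\circ\cdots\circ f_{\sigma_k}(A)$ with the $F$-fibre of $A$ over $\varsigma$, the single point $\pi_F(\varsigma)$. Hence (ii) holds for every $x\ne\pi_{F^{\ast}}(\varsigma)$ and (iii) for every $x\ne\pi_F(\varsigma)$, so $X':=X\setminus\{\pi_F(\varsigma),\pi_{F^{\ast}}(\varsigma)\}$ secures both simultaneously.

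It then remains to check (i). The exceptional set is the two points $\pi_F(\varsigma)\in A$ and $\pi_{F^{\ast}}(\varsigma)\in A^{\ast}$ (distinct, since $A\cap A^{\ast}=\emptyset$), so (i) amounts to each singleton being porous in $X$. This is the only place where I expect to lean on the ambient geometry, and the main point of friction: a singleton is porous exactly when its point is suitably non-isolated in $X$, and each exceptional point is a limit of nearby orbits that \emph{do} escape --- the very ``escape from the tower'' phenomenon --- hence is non-isolated; on the spaces of interest, notably the Riemann sphere of the M\"{o}bius example, singletons are porous, and a finite union of porous sets is $\sigma$-porous. The conceptual heart, and the step where the point-fibred hypotheses on both $A$ and $A^{\ast}$ are indispensable, is the collapse of each trapped set to a single fibre; everything else is the bookkeeping of the two invariances together with one appeal to Corollary~\ref{Corollary1}.
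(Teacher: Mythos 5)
Your proposal follows essentially the same route as the paper: the paper likewise reduces the exceptional set for (ii) to the single point $\pi_{F^{\ast}}(\varsigma)$ (every other $x$ either lies in $\mathcal{B}(A)$ or, because $\varsigma$ is not an $F^{\ast}$-address of $x$, its forward orbit escapes $A^{\ast}$ after finitely many steps, after which the still-disjunctive shifted sequence and Corollary \ref{Corollary1} take over), treats (iii) symmetrically, and intersects the two good sets; your nested-intersection identification of the trapped set with the $\varsigma$-fibre is just a cleaner rendering of the paper's ``highest index of agreement'' step (whose displayed formula even writes the inverse maps where the forward compositions are meant). The friction point you flag --- that a singleton need not be porous in an arbitrary compact metric space --- is equally present in the paper's own proof, which only exhibits some nonempty open subset of each punctured neighbourhood of $\pi_{F^{\ast}}(\varsigma)$ rather than a ball of radius a fixed fraction of $r$, so you have lost nothing relative to the published argument; you have merely made its gap explicit.
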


\begin{proof}
Let $x\in X$. If $x=\pi_{F^{\ast}}(\varsigma)$ then, given any open
neighbourhood $O(x)$ of $x$ there is an open set $O(x^{\prime})\subset
O(x)\backslash\{x\}$ and, obviously, every point $y$ in $O(x^{\prime})$ either
belongs to the basin $\mathcal{B}(A)$ of $A,$ in which case its orbit yields
$A,$ or $y\in A^{\ast}$ and has a compact set of addresses $\pi_{F^{\ast}%
}^{-1}(y)$ that does not include $\varsigma$. Let $K$ be the highest index of
agreement between $\varsigma$ and any member of $\pi_{F^{\ast}}^{-1}(y)$.
Then, using equation \eqref{commuteq}, we must have
\[
f_{\sigma_{K+1}}^{-1}\circ f_{\sigma_{K}}^{-1}\circ f_{\sigma_{K-1}}^{-1}%
\circ...f_{\sigma_{1}}^{-1}(y)\in\mathcal{B}(A),
\]
(for otherwise there would have been one higher level of agreement) which
tells us (using disjunctiveness) that the chaos game orbit generated by
$F,y,\varsigma$ yields $A$. It follows that the set of points $x$, denoted
$X^{\prime\prime}$, for which the chaos game generated by $F,x,\varsigma$
yields $A$ has a $\sigma$-porous complement $X\backslash X^{\prime\prime}$.
Similarly for $x$ in the set of points $Y$, for which the chaos game generated
by $F^{\ast},x,\varsigma$ yields $A^{\ast}$ has also a $\sigma$-porous
complement. The proof is completed by choosing $X^{\prime}=X^{\prime\prime
}\cap Y.$
\end{proof}

\section*{Acknowledgement}

The authors thank Pablo Guti\'{e}rrez Barrientos for pointing out a
significant error in an earlier version of this paper. The second author
wishes to thank the Australian National University for the kind hospitality
during his fruitful stays at the Mathematical Sciences Institute in February
2011 and 2012.

\end{document}